\tikzset{node distance=2cm, auto}
\newtheorem{theorem}{Theorem}[section]
\newtheorem*{theorem*}{Theorem}
\newtheorem{corollary}[theorem]{Corollary}
\newtheorem{lemma}[theorem]{Lemma}
\newtheorem{proposition}[theorem]{Proposition}
\theoremstyle{definition}
\newtheorem{definition}[theorem]{Definition}
\newtheorem{remark}[theorem]{Remark}
\newcommand{\Sum}{\displaystyle\sum}
\newcommand{\Lim}{\displaystyle\lim}
\newcommand{\Frac}[2]{\displaystyle\frac{#1}{#2}}
\newcommand{\Prod}{\displaystyle\prod}
\title[The vortex-like behavior of the R.z.f. to the right of the critical strip]{The vortex-like behavior of the Riemann zeta function\\ to the right of the critical strip}
\author{ {\bf Juan Mat\'{i}as Sepulcre, Tom\'{a}s Vidal}}
\address{Department of Mathematics\\ University of
Alicante, 03080-Alicante\\
Spain}
\email{JM.Sepulcre@ua.es, tmvg@alu.ua.es}
\urladdr[First author]{\texttt{http://personal.ua.es/en/jm-sepulcre/}}
\begin{document}
 \begin{abstract}
Based on an equivalence relation that was established recently on exponential sums,
in this paper we study the class of functions that
are equivalent to the Riemann zeta function in the half-strip $\{s\in\mathbb{C}:\operatorname{Re}s>1\}$. In connection with this class of functions, we first determine the value of the maximum abscissa from which the images of any function in it cannot take  a prefixed argument.
The main result shows that each of these functions experiments a vortex-like
behavior in the sense that the main argument of its images varies indefinitely near the vertical line $\operatorname{Re}s=1$. In particular, regarding the Riemann zeta function $\zeta(s)$, for every $\sigma_0>1$ we can assure the existence of a relatively dense set of real numbers $\{t_m\}_{m\geq 1}$ such that the parametrized curve traced by the points  $(\operatorname{Re}(\zeta(\sigma+it_m)),\operatorname{Im}(\zeta(\sigma+it_m)))$, with $\sigma\in(1,\sigma_0)$, makes a prefixed finite number of turns around the origin.
\end{abstract}

\maketitle


\textbf{Key words.} Riemann zeta function, Exponential sums,  Almost periodic functions,  Bohr's equivalence relation.

\vspace{0.55cm}

\textbf{AMS Subject Classification.} \  \textit{Primary} 11M06; 42A75; 30B50;
\textit{Secondary} 11K60; 30D20; 30Axx.

\medskip



\section{Introduction}

General Dirichlet series consist of those exponential sums that take the form
$$\Sum_{n\geq 1}a_ne^{-\lambda_n s},\ a_n\in\mathbb{C},\ s=\sigma+it,$$
 where
$\{\lambda_n\}$ is a strictly increasing sequence of posi\-tive numbers
tending to infinity.
In particular, it is widely known that the Riemann zeta function $\zeta(s)$, which plays a pivotal role
in analytic number theory, is defined as the analytic continuation of the function defined for $\sigma>1$ by the sum
$\sum_{n=1}^{\infty}\frac{1}{n^s}$, which constitutes a classical Dirichlet series.

In the beginnings of the 20th century, H. Bohr gave important steps in the understanding
of Dirichlet series and their regions of convergence, uniform convergence and absolute
convergence. As a result of his investigations on these functions, Bohr introduced an equivalence relation among them that led to the so-called Bohr's equivalence theorem, which shows that equivalent Dirichlet series take the same values in certain vertical lines or strips in the complex plane (e.g. see \cite{Apostol,BohrDirichlet,Rigue,Spira}).

Precisely, by analogy with Bohr's theory, we established in \cite{SV} an equi\-valence relation $\sim$ on the classes $\mathcal{S}_{\Lambda}$ consisting of exponential sums of the form
 \begin{equation}\label{eqqnew}
\sum_{j\geq 1}a_je^{\lambda_jp},\ a_j\in\mathbb{C},\ \lambda_j\in\Lambda,
\end{equation}
where $\Lambda=\{\lambda_1,\lambda_2,\ldots,\lambda_j,\ldots\}$ is an arbitrary countable set of distinct real numbers, and $p$ is a parameter (in our case, it will be changed by $s =
 \sigma+ it$ in the complex case, or by $t$ in the real case). In the context of almost periodic functions, to which this equivalence relation can also be extended, the main result of \cite{SV} refined Bochner's result in the sense that it was proved that the condition of almost periodicity is equivalent to that fact that every sequence of (vertical) translates has a subsequence that converges uniformly to an equivalent function (see also \cite{SV2}).
Throughout this work, we will use a generalization of Bohr's equivalence relation, defined in Section \ref{sect2}, which was used in  \cite{new} to get a a result like Bohr's equivalence theorem extended to certain classes of almost periodic functions in vertical strips $\{s\in\mathbb{C}:\alpha<\operatorname{Re}s<\beta\}$.

Regarding the Riemann zeta function $\zeta(s)$, the connection between it and prime numbers was discovered by L. Euler, who proved the identity
\begin{equation*}
\zeta(s)=\Prod_{k=1}^{\infty}\Frac{1}{1-p_k^{-s}}\ \mbox{for
}\operatorname{Re}s>1,
\end{equation*}
where the product on the right hand extends over all prime numbers
$p_k$. In view of the Euler product, it is easily seen that
$\zeta(s)$ has no zeros in the half-plane $\sigma>1$.
It is also known that the Dirichlet series and the Euler product of $\zeta(s)$ converge
absolutely in the half-plane $\sigma>1$ and uniformly in
$\sigma\geq 1+\delta$ for any $\delta>0$. A more advanced introduction to the theory surrounding the Riemann hypothesis can be found for example in \cite{Bor}.

In general terms, let $\{a_2,a_3,a_5,\ldots,a_{p_j},\ldots\}$ be
an arbitrary sequence of complex numbers such that $|a_{p_j}|=1$
for each $j=1,2,\ldots$. Take $a_1=1$ and define
$a_n=a_{p_1}^{\alpha_1}a_{p_2}^{\alpha_2}\cdots
a_{p_{k_n}}^{\alpha_{k_n}}$ when
$n=p_1^{\alpha_1}p_2^{\alpha_2}\cdots p_{k_n}^{\alpha_{k_n}}$ is
not a prime number. Throughout this paper, associated with such a sequence $\{a_2,a_3,a_5,\ldots\}$, we will consider the generic exponential sum
\begin{equation}\label{unoii}
\Sum_{n=1}^{\infty}\Frac{a_n}{n^s}
\end{equation}
which also converges absolutely in $\sigma>1$ and uniformly in
$\sigma\geq 1+\delta$ for any $\delta>0$. For example, the choices $a_{p_j}=1$ for each $j=1,2,\ldots$ and $a_{p_j}=-1$ for each $j=1,2,\ldots$ provide respectively the Riemann zeta function and the Dirichlet series of the Liouville function $\lambda(n)=(-1)^{\Omega(n)}$, where $\Omega(n)$ is the number of prime factors of $n$ (counted with multiplicities). Precisely, we show in Proposition \ref{cormontel} that the image of these two functions on the real axis provides the above and below bounds for
the absolute value of the image of each exponential sum of type \eqref{unoii} throughout every vertical line or closed half-strip in
$\{s\in\mathbb{C}:\operatorname{Re}s>1\}$.

With respect to the arguments of all exponential sums of type \eqref{unoii}, the special choices $a_{p_j}=i$ for each $j=1,2,\ldots$ and $a_{p_j}=-i$ for each $j=1,2,\ldots$ provide bounds for the change of these arguments (see Lemma \ref{lemimco}) and, in fact, they allows us to determine the maximum abscissa from
which the images of any exponential sum of type \eqref{unoii} cannot take a prefixed argument (see Lemma \ref{lemau}). In particular, this yields that the images of any function equivalent to the Riemann zeta function cannot take negative real values on a certain half-plane of the form $\{s\in\mathbb{C}:\operatorname{Re}s>\sigma_{\pi}\}$, with $\sigma_{\pi}>1$.

Likewise,
from Euler-type
product formula for these sums $S(s)$ of type \eqref{unoii} (see Lemma \ref{lemcu}),
the main result of our paper shows that each one of these functions, and in particular the Riemann zeta function, experiments a vortex-like behavior in the sense that, given $\sigma_0>1$ and $n\in\mathbb{N}$, there exists a relatively dense set of real numbers $\{t_{n,m}\}_{m\geq 1}$ such that, for each $m=1,2,\ldots$, the image of the vector-valued function $(\operatorname{Re}S(\sigma+it_{n,m}), \operatorname{Im}S(\sigma+it_{n,m}))$, for $\sigma$ in the interval $(1,\sigma_0)$, traces a curve in the plane which makes at least $n$ turns around the origin (see Theorem \ref{thm2} in this paper, and related results in Lemma \ref{kema} and propositions \ref{thm} and \ref{cor8}). To the best of our knowledge, this result has not been reported in the literature.

\section{The class of functions equivalent to the Riemann zeta function}\label{sect2}

Based on the Bohr's equivalence relation, which was considered in \cite[p.173]{Apostol} for general Dirichlet series, we defined in \cite{SV,SV2,new} new equivalence relations in the more general context of the classes $S_\Lambda$ of exponential sums of type \eqref{eqqnew}. In this paper, we will use the following definition which constitutes the same equivalence relation as that of \cite[Definition 2]{new}.

\begin{definition}\label{DefEquiv00}
Given $\Lambda=\{\lambda_1,\lambda_2,\ldots,\lambda_j,\ldots\}$ a set of distinct real numbers, consider $A_1(p)$ and $A_2(p)$ two exponential sums in the class $\mathcal{S}_{\Lambda}$, say
$A_1(p)=\sum_{j\geq1}a_je^{\lambda_jp}$ and $A_2(p)=\sum_{j\geq1}b_je^{\lambda_jp}.$
We will say that $A_1(p)$ is equivalent to $A_2(p)$ if there exists a $\mathbb{Q}$-linear map $\psi:\operatorname{span}_{\mathbb{Q}}(\Lambda)\to\mathbb{R}$ such that
$b_j=a_je^{i\psi(\lambda_j)}$ for each $j=1,2,\ldots$. 
\end{definition}

Let
$G_{\Lambda}=\{g_1, g_2,\ldots, g_k,\ldots\}$ be a basis of the
vector space over the rational numbers generated by a set $\Lambda=\{\lambda_1,\lambda_2,\ldots,\lambda_j,\ldots\}$, 
which implies that $G_{\Lambda}$ is linearly independent over the rational numbers and each $\lambda_j$ is expressible as a finite linear combination of terms of $G_{\Lambda}$, say
\begin{equation*}\label{errej}
\lambda_j=\sum_{k=1}^{q_j}r_{j,k}g_k,\ \mbox{for some }r_{j,k}\in\mathbb{Q}.
\end{equation*}
By abuse of notation, we will say that $G_{\Lambda}$ is a basis for $\Lambda$. Moreover, we will say that $G_{\Lambda}$ is an integral basis for $\Lambda$ when $r_{j,k}\in\mathbb{Z}$ for each $j,k$, i.e. $\Lambda\subset \operatorname{span}_{\mathbb{Z}}(G_{\Lambda})$ (it is worth noting that all the results of \cite{SV}
which can be formulated in terms of an integral basis are also valid under Definition \ref{DefEquiv00}).

In the particular case of the Riemann zeta function $\zeta(s)=\sum_{n\geq 1}\frac{1}{n^s}$, with $\operatorname{Re}s>1$, we can take  $\{\log2,\log3,\ldots,\log p_k,\ldots\}$, where $p_k$ is the $k$-th prime number, as an integral basis for the set $\Lambda$.
Likewise, the set of exponential sums which are equivalent to $\zeta(s)$ are given by
the series, for every choice of $\mathbf{x}=(x_1,x_2,\ldots,x_k,\ldots)\in%
\mathbb{R}^{\infty}$, of the form
\begin{equation}\label{unoi}
\zeta_{\mathbf{x}}(s):=\sum_{n\geq1}e^{<\mathbf{r}_n,\mathbf{x}>i}n^{-s},\operatorname{ with } \operatorname{Re}s>1,
\end{equation}
where $\mathbf{r}_n$ is the vector of integer components satisfying $\log n=<\mathbf{r}_n,\mathbf{g}>$ with $\mathbf{g}=(\log2,\log3,\ldots,\log p_k,\ldots)$ (see, for instance, \cite[Section 8.8]{Apostol}, \cite[Proposition 1]{SV} or \cite[Expression (2.2)]{new}).
In particular, the vector $\mathbf{x}=(\pi,\pi,\ldots,\pi,\ldots)$ generates the Dirichlet series for the Liouville function, denoted here as $\zeta_{\boldsymbol{\pi}}(s)$, which is related to the Riemann zeta function \cite{Lehman,Titchmarsh} by
\begin{equation*}\label{zliouville}
\zeta_{\boldsymbol{\pi}}(s)=\sum_{n=1}^{\infty}\frac{\lambda(n)}{n^s}=\Frac{\zeta(2s)}{\zeta(s)},\ \mbox{for }\operatorname{Re}s>1,
\end{equation*}
where $\lambda(n)$ is the Liouville's function.
Hence $\zeta_{\boldsymbol{\pi}}(s)$ has no singular points in the domain $\operatorname{Re}s>1$.

Now, let $\{a_2,a_3,a_5,\ldots,a_{p_j},\ldots\}$ be
an arbitrary sequence of complex numbers such that $|a_{p_j}|=1$
for each $j=1,2,\ldots$. Take $a_1=1$ and define
$a_n=a_{p_1}^{\alpha_1}a_{p_2}^{\alpha_2}\cdots
a_{p_{k_n}}^{\alpha_{k_n}}$ when
$n=p_1^{\alpha_1}p_2^{\alpha_2}\cdots p_{k_n}^{\alpha_{k_n}}$ is
not a prime number. Note that the exponential sum $\sum_{n\geq 1}\frac{a_n}{n^s}$ is identified with that of \eqref{unoi} given by
$\zeta_{\mathbf{x}}(s)=\sum_{n\geq1}\frac{e^{<\mathbf{r}_n,\mathbf{x}>i}}{n^{s}}$, where $\mathbf{x}=(x_1,x_2,\ldots,x_k,\ldots)$ satisfies $a_{p_j}=e^{ix_j}$ for each $j=1,2,\ldots$. Indeed, we have that
$$a_n=a_{p_1}^{\alpha_1}a_{p_2}^{\alpha_2}\cdots
a_{p_{k_n}}^{\alpha_{k_n}}=e^{i\left(\alpha_1x_1+\alpha_2x_2+\ldots+\alpha_{k_n}x_{k_n}\right)}=e^{<\mathbf{r}_n,\mathbf{x}>i},$$
where $\mathbf{r}_n$ is defined above. Consequently, we will handle these series written in the form of \eqref{unoi}.

Although the following preliminary results are reasonably simple, we next provide their proof for the sake of completeness.
We first obtain an Euler-type
product for $\zeta_{\mathbf{x}}(s)$ on $\operatorname{Re}s>1$ and we
prove that the convergence of this Euler-type product is
uniform in every half-plane $\operatorname{Re}s\geq 1+\delta$,
$\delta>0$.

\begin{lemma}\label{lemcu}
Given $\mathbf{x}=(x_1,x_2,\ldots,x_k,\ldots)\in\mathbb{R}^{\infty}$, the product
$\Prod_{k=1}^{\infty}\Frac{1}{1-e^{ix_{k}}p_k^{-s}}$ converges
uniformly to $\zeta_{\mathbf{x}}(s)$ in every half-plane
$\operatorname{Re}s\geq 1+\delta$, with $\delta>0$.
\end{lemma}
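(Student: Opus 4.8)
The plan is to establish the Euler-type product factorization in two stages, first obtaining the identity formally on a region of absolute convergence and then upgrading to uniform convergence on half-planes $\operatorname{Re}s\ge 1+\delta$. First I would recall that the series $\zeta_{\mathbf{x}}(s)=\sum_{n\ge 1}e^{<\mathbf{r}_n,\mathbf{x}>i}n^{-s}$ converges absolutely for $\operatorname{Re}s>1$, since $|e^{<\mathbf{r}_n,\mathbf{x}>i}n^{-s}|=n^{-\sigma}$ and $\sum n^{-\sigma}<\infty$ there; moreover this domination is uniform on $\operatorname{Re}s\ge 1+\delta$. The coefficients are completely multiplicative by construction: writing $a_n=e^{<\mathbf{r}_n,\mathbf{x}>i}$ with $a_{p_k}=e^{ix_k}$, one has $a_{mn}=a_ma_n$ because $\mathbf{r}_{mn}=\mathbf{r}_m+\mathbf{r}_n$ (exponents of prime factorizations add), and in particular $a_{p_k^{\alpha}}=e^{i\alpha x_k}$.

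The key step is the standard manipulation: for each prime $p_k$ and $\operatorname{Re}s>1$ (indeed $\operatorname{Re}s\ge 1+\delta$ suffices since $|e^{ix_k}p_k^{-s}|=p_k^{-\sigma}\le p_k^{-(1+\delta)}<1$), the geometric series gives
\[
\frac{1}{1-e^{ix_k}p_k^{-s}}=\sum_{\alpha=0}^{\infty}e^{i\alpha x_k}p_k^{-\alpha s}=\sum_{\alpha=0}^{\infty}a_{p_k^{\alpha}}(p_k^{\alpha})^{-s}.
\]
Taking the finite product over $k=1,\dots,K$ and expanding, absolute convergence lets me rearrange freely, and by unique factorization the product equals $\sum_{n\in A_K}a_n n^{-s}$, where $A_K$ is the set of positive integers all of whose prime factors lie among $p_1,\dots,p_K$. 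Letting $K\to\infty$, the tail $\sum_{n\notin A_K}|a_n n^{-s}|\le\sum_{n>p_K}n^{-\sigma}$, which I should bound uniformly for $\sigma\ge 1+\delta$ by $\sum_{n>p_K}n^{-(1+\delta)}\to 0$ as $K\to\infty$; hence the partial products converge to $\zeta_{\mathbf{x}}(s)$ uniformly on $\operatorname{Re}s\ge 1+\delta$.

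The one point requiring a little care — and the only real obstacle — is justifying that uniform convergence of the partial products of the infinite product (as opposed to a pointwise identity) really follows; I would handle this by noting that the difference between $\zeta_{\mathbf{x}}(s)$ and the $K$-th partial product is exactly $\sum_{n\notin A_K}a_n n^{-s}$ (this is an equality of convergent series once we know the identity on $A_K$), so the uniform estimate on the tail of $\sum n^{-\sigma}$ gives uniform convergence directly, with no need to invoke general criteria for infinite products. Since each factor is nonzero and bounded away from $0$ on $\operatorname{Re}s\ge 1+\delta$, there is also no issue of the limit being identically zero. This completes the proof; it is essentially the classical Euler product argument of Apostol adapted to the twisted coefficients $a_n$, and nothing beyond absolute convergence and unique factorization is needed.
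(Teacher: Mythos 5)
Your proof is correct, but it runs the classical Euler-product argument in the opposite direction from the paper. You expand each factor $\left(1-e^{ix_k}p_k^{-s}\right)^{-1}$ as a geometric series $\sum_{\alpha\ge 0}a_{p_k^{\alpha}}p_k^{-\alpha s}$, multiply out the first $K$ factors, and use complete multiplicativity of $a_n$ together with unique factorization to identify the partial product with $\sum_{n\in A_K}a_n n^{-s}$ over the $p_K$-smooth integers; the error is then exactly the tail $\sum_{n\notin A_K}a_n n^{-s}$, bounded uniformly by $\sum_{n>p_K}n^{-(1+\delta)}$. The paper instead sieves: it multiplies $\zeta_{\mathbf{x}}(s)$ successively by the factors $\left(1-e^{ix_k}p_k^{-s}\right)$, observing that each multiplication strikes out the terms with $n$ divisible by $p_k$, so that $\zeta_{\mathbf{x}}(s)\prod_{k=1}^m\left(1-e^{ix_k}p_k^{-s}\right)-1$ is a sum over integers $\ge p_{m+1}$; it then must also bound $\prod_{k=1}^m\left|1-e^{ix_k}p_k^{-s}\right|^{-1}\le\zeta(1+\delta)$ to convert this into the uniform estimate. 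Both routes reduce to the same tail estimate on $\sum_{n\ge p_{m+1}}n^{-(1+\delta)}$; your version needs the extra bookkeeping of rearranging a product of infinite series (which you correctly justify by absolute convergence), while the paper's version needs the extra uniform bound on the partial product of the Euler factors, which your identity avoids. Your closing observation that the difference of the series is an honest equality of convergent sums, so that no general infinite-product criterion is needed, is exactly the right point to make.
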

\begin{proof}
Since
$\Frac{1}{2^s}\zeta_{\mathbf{x}}(s)=\Sum_{n=1}^{\infty}\Frac{e^{<\mathbf{r}_n,\mathbf{x}>i}}{(2n)^s}$
for any $s$ with $\operatorname{Re}s>1,$ the function
$\zeta_{\mathbf{x}}(s)$ verifies
$$\zeta_{\mathbf{x}}(s)(1-e^{ix_{1}}2^{-s})=1+\frac{e^{ix_{2}}}{3^s}+\frac{e^{ix_{3}}}{5^s}+\frac{e^{ix_{4}}}{7^s}+\frac{e^{2ix_{2}}}{9^s}+\frac{e^{ix_{5}}}{11^s}+\ldots$$
Analogously,
$$\zeta_{\mathbf{x}}(s)(1-e^{ix_{1}}2^{-s})(1-e^{ix_{2}}3^{-s})=1+\frac{e^{ix_{3}}}{5^s}+\frac{e^{ix_{4}}}{7^s}+\frac{e^{ix_{5}}}{11^s}+\ldots$$
and, in general, for the first $m$ primes we have
$$\zeta_{\mathbf{x}}(s)\prod_{k=1}^m(1-e^{ix_{k}}p_k^{-s})=1+\frac{e^{<\mathbf{r}_{l_1},\mathbf{x}>i}}{l_1^s}+\frac{e^{<\mathbf{r}_{l_2},\mathbf{x}>i}}{l_2^s}+\ldots,$$
where $l_1,l_2,\ldots$ are the natural numbers which are not divisible
by any of the $m$ first prime numbers, and hence
$\{l_1,l_2,\ldots\}\subset\{p_{m+1},p_{m+1}+1,\ldots\}$. In this
way, given $\delta>0$, for any $s=\sigma+it$ such that $\sigma\geq
1+\delta$ we get
$$\left|\zeta_{\mathbf{x}}(s)-\prod_{k=1}^m\Frac{1}{1-e^{ix_{k}}p_k^{-s}}\right|=
\left|\prod_{k=1}^m\Frac{1}{1-e^{ix_{k}}p_k^{-s}}\right|\left|\zeta_{\mathbf{x}}(s)\prod_{k=1}^m(1-e^{ix_{k}}p_k^{-s})-1\right|=$$
$$=\prod_{k=1}^m\Frac{1}{\left|1-e^{ix_{k}}p_k^{-s}\right|}\left|\frac{e^{<\mathbf{r}_{l_1},\mathbf{x}>i}}{l_1^s}+\frac{e^{<\mathbf{r}_{l_2},\mathbf{x}>i}}{l_2^s}+\ldots\right|\leq\prod_{k=1}^m\Frac{1}{1-p_k^{-\sigma}}\left(\frac{1}{l_1^{\sigma}}+\frac{1}{l_2^{\sigma}}+\ldots\right)\leq$$
$$\leq\zeta(1+\delta)\sum_{k=p_{m+1}}^{\infty}\frac{1}{k^{1+\delta}},$$
which tends to $0$ as $m\rightarrow\infty$. Hence the result
holds.
\end{proof}

The following result, which is a clear consequence of the Euler product representation, shows that the functions $\zeta_{\mathbf{x}}(s)$ are bounded throughout every vertical line or closed strip in
$\{s\in\mathbb{C}:\operatorname{Re}s>1\}$.

\begin{proposition}\label{cormontel}
Let $\mathbf{x}=(x_1,x_2,\ldots,x_k,\ldots)\in\mathbb{R}^{\infty}$ and $K$ be a closed strip in
$\{s\in\mathbb{C}:\operatorname{Re}s>1\}$. Then
$$\zeta _{\boldsymbol{\pi}}(\sigma_0)\leq |\zeta_{\mathbf{x}}(s)|\leq \zeta(\sigma_0)\ \mbox{for any
} s\in K,$$ where $\sigma_0=\min\{\operatorname{Re}s:s\in K\}$.
\end{proposition}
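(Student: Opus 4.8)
The plan is to argue directly from the Euler-type product supplied by Lemma \ref{lemcu}. Write $\sigma=\operatorname{Re}s$. Since $s\in K$ forces $\sigma\ge\sigma_0>1$, we have $|e^{ix_k}p_k^{-s}|=p_k^{-\sigma}<1$ for every $k$, so each factor $1-e^{ix_k}p_k^{-s}$ is nonzero and, by Lemma \ref{lemcu},
$$|\zeta_{\mathbf{x}}(s)|=\prod_{k=1}^{\infty}\frac{1}{\left|1-e^{ix_k}p_k^{-s}\right|}.$$
The triangle inequality gives, for each $k$, the two-sided bound $1-p_k^{-\sigma}\le \left|1-e^{ix_k}p_k^{-s}\right|\le 1+p_k^{-\sigma}$. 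Taking finite partial products, passing to reciprocals, and letting the number of factors tend to infinity — the middle product converging to $|\zeta_{\mathbf{x}}(s)|$ by Lemma \ref{lemcu}, the outer two converging because $\sum_k p_k^{-\sigma}<\infty$ for $\sigma>1$ — I would obtain
$$\prod_{k=1}^{\infty}\frac{1}{1+p_k^{-\sigma}}\ \le\ |\zeta_{\mathbf{x}}(s)|\ \le\ \prod_{k=1}^{\infty}\frac{1}{1-p_k^{-\sigma}}.$$

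Next I would identify the two extremal products with values of $\zeta$. The Euler product for the Riemann zeta function gives $\prod_{k}(1-p_k^{-\sigma})^{-1}=\zeta(\sigma)$. For the left-hand product, using the elementary identity $\frac{1}{1+x}=\frac{1-x}{1-x^{2}}$ with $x=p_k^{-\sigma}$,
$$\prod_{k=1}^{\infty}\frac{1}{1+p_k^{-\sigma}}=\frac{\prod_{k}(1-p_k^{-\sigma})}{\prod_{k}(1-p_k^{-2\sigma})}=\frac{\zeta(2\sigma)}{\zeta(\sigma)}=\zeta_{\boldsymbol{\pi}}(\sigma),$$
where the last equality is the identity $\zeta_{\boldsymbol{\pi}}(s)=\zeta(2s)/\zeta(s)$ recalled above (valid since $2\sigma>1$). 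Hence $\zeta_{\boldsymbol{\pi}}(\sigma)\le|\zeta_{\mathbf{x}}(s)|\le\zeta(\sigma)$ for every $s$ with $\operatorname{Re}s=\sigma$.

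Finally I would pass from $\sigma$ to $\sigma_0=\min\{\operatorname{Re}s:s\in K\}$ by monotonicity. On $(1,\infty)$ the map $\sigma\mapsto\zeta(\sigma)=\prod_k(1-p_k^{-\sigma})^{-1}$ is decreasing, since each positive factor is, while $\sigma\mapsto\zeta_{\boldsymbol{\pi}}(\sigma)=\prod_k(1+p_k^{-\sigma})^{-1}$ is increasing, again because each factor is; monotonicity of the finite partial products is inherited by the limits. Since $\sigma\ge\sigma_0$, this yields
$$\zeta_{\boldsymbol{\pi}}(\sigma_0)\ \le\ \zeta_{\boldsymbol{\pi}}(\sigma)\ \le\ |\zeta_{\mathbf{x}}(s)|\ \le\ \zeta(\sigma)\ \le\ \zeta(\sigma_0),$$
which is exactly the assertion.

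No step here is a genuine obstacle. The only points deserving a line of care are the manipulation of the infinite products factor-by-factor — legitimate because all factors in the comparison products are positive reals bounded away from $0$ and the convergence is the one already established in Lemma \ref{lemcu} — and the monotonicity of the two comparison products, which reduces to the trivial monotonicity in $\sigma$ of $1\pm p_k^{-\sigma}$.
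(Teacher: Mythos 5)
Your proof is correct and follows essentially the same route as the paper: bound each Euler factor by $1\pm p_k^{-\sigma}$ and exploit monotonicity in $\sigma$. The only cosmetic differences are that the paper gets the upper bound directly from the Dirichlet series via the triangle inequality rather than from the Euler product, and it identifies $\prod_k(1+p_k^{-\sigma_0})^{-1}$ with $\zeta_{\boldsymbol{\pi}}(\sigma_0)$ immediately instead of passing through the identity $\zeta_{\boldsymbol{\pi}}(\sigma)=\zeta(2\sigma)/\zeta(\sigma)$.
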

\begin{proof}
Let $K$ be a strip in
$\{s\in\mathbb{C}:\operatorname{Re}s>1\}$ with
$\sigma_0=\inf\{\operatorname{Re}s:s\in K\}>1$ and
take $s=\sigma+it\in K$.
By the Euler-type product formula, we have
\begin{equation*} |\zeta_{\mathbf{x}}(\sigma+it)|=\left|\prod_{k\geq1}\frac{1}{1-e^{ix_{k}}p_k^{-\sigma-it}}\right|\geq
\prod_{k\geq 1}\frac{1}{1+p_k^{-\sigma}}\geq
\prod_{k\geq 1}\frac{1}{1+p_k^{-\sigma_0}}=\zeta_{\boldsymbol{\pi}}(\sigma_0).
\end{equation*}
Moreover, it is clear that
$$|\zeta_{\mathbf{x}}(s)|=\left|\Sum_{n\geq 1}\Frac{e^{<\mathbf{r}_n,\mathbf{x}>i}}{n^{\sigma+it}}\right|\leq\Sum_{n\geq 1}\Frac{1}{n^{\sigma}}\leq \Sum_{n\geq 1}\Frac{1}{n^{\sigma_0}}=\zeta(\sigma_0).$$
Thus the result holds.
\end{proof}

\section{On the values of the arguments of the functions that are equivalent to the Riemann zeta function}\label{sv}

Let $\zeta_{\mathbf{x}}(s)=\Sum_{n\geq1}e^{<\mathbf{r}_j,\mathbf{x}>i}e^{-s\log n}$, with $\operatorname{Re}s>1$ and $\mathbf{x}\in \mathbb{R}^{\infty}$, be an exponential sum which is equivalent to the Riemann zeta function.
By Lemma \ref{lemcu} we know that $\zeta_{\mathbf{x}}(s)$ can be expressed in terms of the Euler-type product $\Prod_{k=1}^{\infty}\Frac{1}{1-e^{ix_{k}}p_k^{-s}}$ and this product converges uniformly to $\zeta_{\mathbf{x}}(s)$ in every reduced strip of $U=\{s\in\mathbb{C}:\operatorname{Re}s>1\}$. This implies that the principal value of the argument of $\zeta_{\mathbf{x}}(s)$ can be written in terms of
\begin{equation*}\label{champa}
\operatorname{Arg}(\zeta_{\mathbf{x}}(s))=-\sum_{k\geq 1}\operatorname{Arg}(1-e^{ix_{k}}p_k^{-s})\ (\mbox{mod} (-\pi,\pi]).
\end{equation*}
In our case, given $\mathbf{x}\in \mathbb{R}^{\infty}$, we will handle the mapping $A_{\zeta_{\mathbf{x}}}(s):U\mapsto \mathbb{R}_+\cup\{\infty\}$ defined as
\begin{equation}\label{champa2}
A_{\zeta_{\mathbf{x}}}(s):=\sum_{k\geq 1}\left|\operatorname{Arg}(1-e^{ix_{k}}p_k^{-s})\right|.
\end{equation}
Notice that $\lim_{\sigma\to\infty}\zeta(\sigma+it)=1$ for any $t\in\mathbb{R}$ (and hence $\lim_{\sigma\to\infty}\operatorname{Arg}(\zeta(\sigma+it))=0$ for any $t\in\mathbb{R}$). So, thanks to \cite[Theorem 18]{new}, we state that every function $\zeta_{\mathbf{x}}(s)$ satisfies $\lim_{\sigma\to\infty}\zeta_{\mathbf{x}}(\sigma+it)=1$ for any $t\in\mathbb{R}$ (and hence $\lim_{\sigma\to\infty}\operatorname{Arg}(\zeta_{\mathbf{x}}(\sigma+it))=0$ for any $t\in\mathbb{R}$). In particular, we have that $\lim_{\sigma\to\infty}A_{\zeta_{\mathbf{x}}}(\sigma+it)=0$ for any $t\in\mathbb{R}$.

The following lemma shows the importance of $\zeta_{\boldsymbol{\frac{ \pi}{ 2}}}(s)=\prod_{k=1}^{\infty}\frac{1}{1-ip_k^{-s}}$ and $\zeta_{\boldsymbol{-\frac{ \pi}{ 2}}}(s)=\prod_{k=1}^{\infty}\frac{1}{1+ip_k^{-s}}$ in terms of the mapping $A_{\zeta_{\mathbf{x}}}(s)$, which is connected with the argument of the functions $\zeta_{\mathbf{x}}(s)$ equivalent to the Riemann zeta function.

\begin{lemma}\label{lemimco}
Let $A_{\zeta_{\mathbf{x}}}(s)=\sum_{k\geq 1}\left|\operatorname{Arg}(1-e^{ix_{k}}p_k^{-s})\right|$ be the mapping defined above.
It is satisfied:
\begin{itemize}
\item[i)] $A_{\zeta_{\mathbf{x}}}(\sigma+it)\leq A_{\zeta_{\boldsymbol{\frac{ \pi}{ 2}}}}(\sigma)=A_{\zeta_{\boldsymbol{-\frac{ \pi}{ 2}}}}(\sigma)$
for any $\mathbf{x}\in \mathbb{R}^{\infty}$ and $\sigma+it\in U$.
\item[ii)] $A_{\zeta_{\boldsymbol{\frac{ \pi}{ 2}}}}(\sigma)=\Sum_{k\geq 1}\arctan(p_k^{-\sigma})$ for any $\sigma>1$;
\item[iii)] $A_{\zeta_{\boldsymbol{\frac{ \pi}{ 2}}}}(\sigma)<\infty$ for any $\sigma>1$, and $\Lim_{\sigma\to 1^+}A_{\zeta_{\boldsymbol{\frac{ \pi}{ 2}}}}(\sigma)=\infty$;
\item[iv)] The function $A_{\zeta_{\boldsymbol{\frac{ \pi}{ 2}}}}:(1,\infty)\mapsto \mathbb{R}_+$, defined as $$A_{\zeta_{\boldsymbol{\frac{ \pi}{ 2}}}}(\sigma)=\sum_{k\geq 1}\left|\operatorname{Arg}(1- ip_k^{-\sigma})\right|,$$ is continuous and decreasing.
\end{itemize}
\end{lemma}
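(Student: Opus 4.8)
The plan is to dispatch (ii) and (iv) first, since they are essentially computational and already contain the finiteness half of (iii), then to prove the divergence in (iii), and finally the inequality (i), which is the only delicate point.

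For (ii): for real $\sigma>1$ we have $p_k^{-\sigma}>0$, so $1-ip_k^{-\sigma}$ has real part $1$ and imaginary part $-p_k^{-\sigma}<0$; hence it lies in the open fourth quadrant and $\operatorname{Arg}(1-ip_k^{-\sigma})=-\arctan(p_k^{-\sigma})$. Taking absolute values and summing gives $A_{\zeta_{\boldsymbol{\frac{\pi}{2}}}}(\sigma)=\sum_{k\geq1}\arctan(p_k^{-\sigma})$. Since $1+ip_k^{-\sigma}=\overline{1-ip_k^{-\sigma}}$, the two arguments are opposite, so $A_{\zeta_{\boldsymbol{\frac{\pi}{2}}}}(\sigma)=A_{\zeta_{\boldsymbol{-\frac{\pi}{2}}}}(\sigma)$; this is the identity appearing in (i) and (ii). For (iv): each summand $\sigma\mapsto\arctan(p_k^{-\sigma})$ is continuous and strictly decreasing on $(1,\infty)$, and on any half-line $[\sigma_1,\infty)$ with $\sigma_1>1$ one has $0\leq\arctan(p_k^{-\sigma})\leq p_k^{-\sigma}\leq p_k^{-\sigma_1}$, with $\sum_{k\geq1}p_k^{-\sigma_1}\leq\zeta(\sigma_1)<\infty$; by the Weierstrass $M$-test the series converges locally uniformly, hence its sum $A_{\zeta_{\boldsymbol{\frac{\pi}{2}}}}$ is continuous, and being a locally uniform sum of (strictly) decreasing functions it is decreasing. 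In particular $A_{\zeta_{\boldsymbol{\frac{\pi}{2}}}}(\sigma)<\infty$ for every $\sigma>1$, which is the first half of (iii).

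For the divergence in (iii): $\arctan$ is concave on $[0,\infty)$ with $\arctan 0=0$, so $\arctan(x)\geq(\arctan 1)x=\frac{\pi}{4}x$ for $x\in[0,1]$, whence $A_{\zeta_{\boldsymbol{\frac{\pi}{2}}}}(\sigma)\geq\frac{\pi}{4}\sum_{k\geq1}p_k^{-\sigma}$ for every $\sigma>1$. By monotone convergence $\sum_{k\geq1}p_k^{-\sigma}\to\sum_{k\geq1}p_k^{-1}=\infty$ as $\sigma\to1^+$ (divergence of the series of reciprocals of the primes), and therefore $A_{\zeta_{\boldsymbol{\frac{\pi}{2}}}}(\sigma)\to\infty$.

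The core of the lemma is (i), which I would prove factor by factor. Fix $k$, take $\mathbf{x}\in\mathbb{R}^{\infty}$ and $s=\sigma+it\in U$, and set $r=p_k^{-\sigma}\in(0,\tfrac12)$ and $\theta=x_k-t\log p_k$, so that $1-e^{ix_k}p_k^{-s}=1-re^{i\theta}$; since $r<1$ this number has real part $\geq 1-r>0$, lies in the open right half-plane, and
$$\left|\operatorname{Arg}\left(1-e^{ix_k}p_k^{-s}\right)\right|=\arctan\frac{r\,|\sin\theta|}{1-r\cos\theta}.$$
Thus the $k$-th term of $A_{\zeta_{\mathbf{x}}}(\sigma+it)$ is at most $\sup_{\theta\in\mathbb{R}}\arctan\left(\tfrac{r|\sin\theta|}{1-r\cos\theta}\right)$, a quantity depending only on $p_k^{-\sigma}$ that I would evaluate by elementary one-variable calculus (equivalently, by locating the point of largest argument on the circle $\{w:|w-1|=r\}$). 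The assertion (i) then amounts to showing that this per-factor supremum equals $\arctan(p_k^{-\sigma})=|\operatorname{Arg}(1-ip_k^{-\sigma})|$, i.e. that the distinguished functions $\zeta_{\boldsymbol{\pm\frac{\pi}{2}}}$ are the extremal ones; granting this, summing over $k$ — the coordinates $x_k$ being mutually free for arbitrary $\mathbf{x}$, and the sum on the right finite by (iv) — yields $A_{\zeta_{\mathbf{x}}}(\sigma+it)\leq A_{\zeta_{\boldsymbol{\frac{\pi}{2}}}}(\sigma)=A_{\zeta_{\boldsymbol{-\frac{\pi}{2}}}}(\sigma)$. The main obstacle is exactly this pointwise extremality of $\zeta_{\boldsymbol{\pm\frac{\pi}{2}}}$ among all Euler factors of fixed modulus; the remaining steps are routine.
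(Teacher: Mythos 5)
Your handling of (ii), (iii) and (iv) is correct and is essentially the paper's own argument; the only cosmetic difference is that for the divergence in (iii) you use the concavity chord bound $\arctan x\geq \frac{\pi}{4}x$ on $[0,1]$ where the paper invokes $\arctan x>\frac{3x}{3+x^{2}}$ — both reduce the claim to the divergence of $\sum_{k}p_k^{-1}$, and both are fine.

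The genuine gap is in (i), and it is not merely that you deferred the key step: the step you deferred fails. You correctly reduce (i) to the claim that, for fixed $r=p_k^{-\sigma}\in(0,1)$, the supremum over $\theta$ of $\left|\operatorname{Arg}(1-re^{i\theta})\right|=\arctan\bigl(\tfrac{r|\sin\theta|}{1-r\cos\theta}\bigr)$ is attained at $\theta=\pm\tfrac{\pi}{2}$ with value $\arctan r$. Carrying out the one-variable calculus you propose gives
$$\frac{d}{d\theta}\left[\frac{r\sin\theta}{1-r\cos\theta}\right]=\frac{r(\cos\theta-r)}{(1-r\cos\theta)^{2}},$$
so the maximum occurs at $\cos\theta=r$ (the tangency points of the lines through the origin with the circle $|w-1|=r$) and equals $\arctan\bigl(\tfrac{r}{\sqrt{1-r^{2}}}\bigr)=\arcsin r$, which is strictly larger than $\arctan r$. (Concretely, $r=\tfrac12$: $\bigl|\operatorname{Arg}(1-\tfrac12 e^{i\pi/3})\bigr|=\tfrac{\pi}{6}\approx 0.524$, while $\arctan\tfrac12\approx 0.464$.) Hence the per-factor extremality of $\theta=\pm\tfrac{\pi}{2}$ is false; your summation step only yields the majorant $\sum_{k}\arcsin(p_k^{-\sigma})$, not $A_{\zeta_{\boldsymbol{\frac{\pi}{2}}}}(\sigma)$, and in fact taking $x_k=t\log p_k+\arccos(p_k^{-\sigma})$ gives $A_{\zeta_{\mathbf{x}}}(\sigma+it)=\sum_{k}\arcsin(p_k^{-\sigma})>A_{\zeta_{\boldsymbol{\frac{\pi}{2}}}}(\sigma)$, so statement (i) cannot be recovered along this route. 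You should know that the paper's own proof asserts exactly this extremality as ``clear,'' so you have put your finger on a real defect of the published argument rather than missed an available idea; since $\sum_{k}\arcsin(p_k^{-\sigma})$ is still finite for $\sigma>1$ and still tends to $\infty$ as $\sigma\to1^{+}$, the later results could be repaired by using it in place of $A_{\zeta_{\boldsymbol{\frac{\pi}{2}}}}(\sigma)$, but as written neither your proof of (i) nor the paper's goes through.
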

\begin{proof}
i) Given $\sigma>1$ and $k\in\mathbb{N}$, it is clear that $\left|\operatorname{Arg}(1-e^{ix_{k}}p_k^{-\sigma})\right|$ attains the maximum value when $x_{k}=\pm\frac{ \pi}{ 2}$. In the same way, fixed $\sigma+it\in\mathbb{C}$ with $\sigma>1$, we have $\left|\operatorname{Arg}(1-e^{ix_{k}}p_k^{-\sigma-it})\right|=\left|\operatorname{Arg}(1-e^{i(x_{k}-t\log p_k)}p_k^{-\sigma})\right|\leq \left|\operatorname{Arg}(1+ ip_k^{-\sigma})\right|=\left|\operatorname{Arg}(1-ip_k^{-\sigma})\right|$ for each $k=1,2,\ldots$, which proves i).

ii) Given $\sigma>1$, note that
$$\operatorname{Arg}(1-ip_k^{-\sigma})=\arctan(-p_k^{-\sigma})=-\arctan(p_k^{-\sigma})$$
and
$$\operatorname{Arg}(1+ip_k^{-\sigma})=\arctan(p_k^{-\sigma}).$$
Hence
$$A_{\zeta_{\boldsymbol{\pm\frac{ \pi}{ 2}}}}(\sigma)=\sum_{k\geq 1}|\arctan(p_k^{-\sigma})|=\sum_{k\geq 1}\arctan(p_k^{-\sigma}).$$

iii) For $\sigma>0$, it is easy to prove that
$\Frac{3\sigma}{3+\sigma^2}<\arctan\sigma<\sigma$ (see for example \cite[p. 665]{Ineq} for the first inequality). 
Hence
$$\sum_{k\geq 1}\frac{3p_k^{-\sigma}}{3+p_k^{-2\sigma}}\leq \sum_{k\geq 1}\arctan(p_k^{-\sigma})\leq \sum_{k\geq 1}p_k^{-\sigma}.$$
Moreover, we have that $\Sum_{k\geq 1}\frac{ 1}{ p_k^{\sigma}}$ and $\Sum_{k\geq 1}\frac{3/p_k^{\sigma}}{3+1/p_k^{2\sigma}}$ have the same character of convergence.
Consequently,
$$\sum_{k\geq 1}\arctan(p_k^{-\sigma})=A_{\zeta_{\boldsymbol{\frac{ \pi}{ 2}}}}(\sigma)<\infty\mbox{ for any }\sigma>1$$
and
$$\lim_{\sigma\to 1^+}\sum_{k\geq 1}\arctan(p_k^{-\sigma})=\lim_{\sigma\to 1^+}A_{\zeta_{\boldsymbol{\frac{ \pi}{ 2}}}}(\sigma)=\infty.$$

iv) It is clear that $\lim_{\sigma\to\infty}A_{\zeta_{\boldsymbol{\frac{ \pi}{ 2}}}}(\sigma)=0$. Furthermore, if $1<\sigma_1<\sigma_2$ then
$$\left|\operatorname{Arg}(1-ip_k^{-\sigma_1})\right|>\left|\operatorname{Arg}(1-ip_k^{-\sigma_2})\right|\mbox{ for each }k=1,2,\ldots$$
Consequently,
$$A_{\zeta_{\boldsymbol{\frac{ \pi}{ 2}}}}(\sigma_1)=\sum_{k\geq 1}\left|\operatorname{Arg}(1- ip_k^{-\sigma_1})\right|\geq \sum_{k\geq 1}\left|\operatorname{Arg}(1- ip_k^{-\sigma_2})\right|=A_{\zeta_{\boldsymbol{\frac{ \pi}{ 2}}}}(\sigma_2),$$
which proves that $A_{\zeta_{\boldsymbol{\frac{ \pi}{ 2}}}}(\sigma)$ is decreasing on $(1,\infty)$. Moreover,
by Weierstrass's criterion, the sum $$\sum_{k\geq 1}\left|\operatorname{Arg}(1- ip_k^{-\sigma})\right|=\Sum_{k\geq 1}\arctan(p_k^{-\sigma})\leq \sum_{k\geq 1}p_k^{-\sigma}$$ converges uniformly on every reduced strip of $U$, which proves the continuity of $A_{\zeta_{\boldsymbol{\frac{ \pi}{ 2}}}}(\sigma)$.
\end{proof}

As a consequence of the lemma above, given $\mathbf{x}\in \mathbb{R}^{\infty}$, the mapping $A_{\zeta_{\mathbf{x}}}(s):U\mapsto \mathbb{R}_+$ considered in \eqref{champa2} leads to a function well defined.
Furthermore, it is clear that the case $\zeta_{\boldsymbol{\frac{ \pi}{ 2}}}(s)$ is particularly significant in our context. We next prove the following lemma regarding this function.

\begin{lemma}\label{kema}
Fixed $\varepsilon>0$, the parametrized curve traced by the points  $(\operatorname{Re}(\zeta_{\boldsymbol{\frac{ \pi}{ 2}}}(\sigma)),\operatorname{Im}(\zeta_{\boldsymbol{\frac{ \pi}{ 2}}}(\sigma))$, where $\sigma$ varies in the interval $(1,1+\varepsilon)$, makes infinitely many turns around the origin.
\end{lemma}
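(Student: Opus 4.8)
The plan is to show that as $\sigma \to 1^+$ the argument of $\zeta_{\boldsymbol{\frac{\pi}{2}}}(\sigma)$ grows without bound, so that the curve wraps around the origin arbitrarily many times; combined with the fact that the curve stays in a bounded annulus away from the origin (by Proposition~\ref{cormontel}), this forces infinitely many full turns. First I would write, using Lemma~\ref{lemcu} and the continuity of the argument along the uniformly convergent product on reduced strips,
\[
\operatorname{arg}\big(\zeta_{\boldsymbol{\frac{\pi}{2}}}(\sigma)\big) = -\sum_{k\geq 1}\operatorname{Arg}\!\big(1 - i p_k^{-\sigma}\big) = \sum_{k\geq 1}\arctan\!\big(p_k^{-\sigma}\big),
\]
where the right-hand side is interpreted as a genuine continuous real-valued branch of the argument (not reduced mod $2\pi$), valid because each factor $1/(1-ip_k^{-\sigma})$ lies in the right half-plane with positive imaginary part, so each partial product has a well-defined continuous argument and these arguments add. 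By Lemma~\ref{lemimco}(iii) this sum is finite for every $\sigma>1$ but tends to $+\infty$ as $\sigma \to 1^+$; by Lemma~\ref{lemimco}(iv) it is continuous (indeed it equals $A_{\zeta_{\boldsymbol{\frac{\pi}{2}}}}(\sigma)$, which is continuous and decreasing).

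Next I would make the winding-number argument precise. Fix $\varepsilon>0$ and set $\sigma_0 = 1+\varepsilon$. By Proposition~\ref{cormontel}, for $\sigma \in (1, 1+\varepsilon)$ the point $\zeta_{\boldsymbol{\frac{\pi}{2}}}(\sigma)$ lies in the annulus $\zeta_{\boldsymbol{\pi}}(1+\varepsilon) \leq |\zeta_{\boldsymbol{\frac{\pi}{2}}}(\sigma)| \leq \zeta(\sigma)$; in particular it is never $0$, so a continuous argument along the curve is well defined and equals the sum above up to an additive constant. Since $\lim_{\sigma \to \infty} A_{\zeta_{\boldsymbol{\frac{\pi}{2}}}}(\sigma) = 0$ we may normalize so that the continuous argument is exactly $\sum_{k\geq 1}\arctan(p_k^{-\sigma})$. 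As $\sigma$ decreases from $1+\varepsilon$ toward $1$, this quantity increases continuously from the finite value $A_{\zeta_{\boldsymbol{\frac{\pi}{2}}}}(1+\varepsilon)$ to $+\infty$ by Lemma~\ref{lemimco}(iii)--(iv). Hence the total variation of the argument along the curve is infinite, and since the curve never passes through the origin, it must cross any fixed ray from the origin infinitely often; equivalently, for every $N$ there exist $1 < \sigma'' < \sigma' < 1+\varepsilon$ with $\operatorname{arg}(\zeta_{\boldsymbol{\frac{\pi}{2}}}(\sigma')) - \operatorname{arg}(\zeta_{\boldsymbol{\frac{\pi}{2}}}(\sigma'')) \geq 2\pi N$, so the sub-arc over $[\sigma'', \sigma']$ already makes at least $N$ turns about the origin. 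This is exactly the assertion that the curve makes infinitely many turns.

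The only slightly delicate point — the step I would treat as the real content — is justifying that the formal identity $\operatorname{arg}(\zeta_{\boldsymbol{\frac{\pi}{2}}}(\sigma)) = \sum_{k}\arctan(p_k^{-\sigma})$ holds for a \emph{continuous (unreduced)} branch of the argument, rather than merely modulo $2\pi$. For this I would argue factor by factor: each $1 - i p_k^{-\sigma}$ has modulus $\sqrt{1+p_k^{-2\sigma}} > 0$ and argument $-\arctan(p_k^{-\sigma}) \in (-\pi/4, 0)$, so $1/(1-ip_k^{-\sigma})$ has argument $\arctan(p_k^{-\sigma}) \in (0, \pi/4)$, and finite products of these lie in the open right half-plane, giving an unambiguous continuous argument equal to the (convergent) sum of the individual small positive arguments; passing to the limit $m \to \infty$ using Lemma~\ref{lemcu} (uniform convergence, hence convergence of arguments to that of the limit, which is nonzero by Proposition~\ref{cormontel}) yields the claimed continuous branch. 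Everything else is a direct appeal to parts (iii) and (iv) of Lemma~\ref{lemimco} and to Proposition~\ref{cormontel}.
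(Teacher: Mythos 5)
Your proof is correct and follows essentially the same route as the paper's: both rest on Lemma \ref{lemimco}(iii)--(iv) and on reading off the argument of $\zeta_{\boldsymbol{\frac{\pi}{2}}}(\sigma)$ from the Euler product as $\sum_{k\geq 1}\arctan(p_k^{-\sigma})$, which tends to $+\infty$ as $\sigma\to1^+$ while the curve stays away from the origin. You are in fact more careful than the paper on the one point that matters---that this sum is a genuine continuous (unreduced) branch of the argument, not merely an identity modulo $2\pi$; the paper only records the congruence $(\mathrm{mod}\ (-\pi,\pi])$ and declares the conclusion clear. One intermediate assertion in your justification is false, though: finite partial products of the factors $1/(1-ip_k^{-\sigma})$ do \emph{not} stay in the open right half-plane, since their total argument $\sum_{k=1}^{m}\arctan(p_k^{-\sigma})$ exceeds $\pi/2$ for $\sigma$ near $1$ and $m$ large---which is precisely the phenomenon you are proving. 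The step is easily repaired and does not affect the argument: each \emph{individual} factor does lie in the right half-plane, hence carries a canonical continuous argument $\arctan(p_k^{-\sigma})$ vanishing as $\sigma\to\infty$, and continuous argument lifts add under multiplication and pass to the uniform limit along a curve avoiding $0$ (Lemma \ref{lemcu} and Proposition \ref{cormontel}); no claim about the location of the partial products is needed.
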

\begin{proof}
From Lemma \ref{lemimco} (see the point iv)), we deduce that $A_{\zeta_{\boldsymbol{\frac{ \pi}{ 2}}}}(\sigma):(1,\infty)\mapsto(0,\infty)$, defined as  $A_{\zeta_{\boldsymbol{\frac{ \pi}{ 2}}}}(\sigma)=\Sum_{k\geq 1}\left|\operatorname{Arg}(1- ip_k^{-\sigma})\right|,$  is a decreasing function and it is also bijective. Likewise, by Lemma \ref{lemimco}, point iii), we have that $\Lim_{\sigma\to 1^+}A_{\zeta_{\boldsymbol{\frac{ \pi}{ 2}}}}(\sigma)=\infty$. In this way, in virtue of
$$\operatorname{Arg}(\zeta_{\boldsymbol{\frac{ \pi}{ 2}}}(\sigma))=-\sum_{k\geq 1}\operatorname{Arg}(1-ip_k^{-\sigma})=\sum_{k\geq 1}\arctan(p_k^{-\sigma}) \ (\mbox{mod} (-\pi,\pi]),$$
 it is clear that, fixed $\theta\in(-\pi,\pi]$, the equality $\operatorname{Arg}(\zeta_{\boldsymbol{\frac{ \pi}{ 2}}}(\sigma))=\theta$ is attained for infinitely many values in $(1,1+\varepsilon)$, with $\varepsilon>0$, and the
 curve originated from the points $\left(\operatorname{Re}(\zeta_{\boldsymbol{\frac{ \pi}{ 2}}}(\sigma)),\operatorname{Im}(\zeta_{\boldsymbol{\frac{ \pi}{ 2}}}(\sigma))\right)$, with $\sigma\in(1,1+\varepsilon)$, makes infinitely many turns around the origin.
\end{proof}

\begin{remark}
In view of Lemma \ref{lemcu}, it can be easily seen that the result above is also valid for other cases such as $\zeta_{\mathbf{x}}(\sigma)$ where $\mathbf{x}\in\mathbb{R}^{\infty}$ is a vector whose components are all $\pm\frac{\pi}{ 2}$ except at most a finite amount of them.
\end{remark}

Now, given $\theta\in [0,\pi]$ and $\mathbf{x}\in\mathbb{R}^{\infty}$, define $$\sigma_{\theta}^{\mathbf{x}}:=\sup\{\sigma>1:|\operatorname{Arg}(\zeta_{\mathbf{x}}(\sigma+it))|=\theta\mbox{ for some }\sigma+it\in U\}.$$
In virtue of Lemma \ref{kema} and \cite[Theorem 18]{new} it is now clear that $\sigma_{\theta}^{\mathbf{x}}$ is well defined, as the set $\{\sigma>1:|\operatorname{Arg}(\zeta_{\mathbf{x}}(\sigma+it))|=\theta\mbox{ for some }\sigma+it\in\mathbb{C}\}$ is not empty for any $\theta\in[0,\pi]$. Moreover, it
coincides with
\begin{equation}\label{venga}
\sigma_{\theta}^{\mathbf{x}}=\sup\{\sigma>1:A_{\zeta_{\mathbf{x}}}(\sigma+it))=\theta\mbox{ for some }\sigma+it\in U\}.
\end{equation}
In this respect, we next deduce from Lemma \ref{lemimco} that $\sigma_{\theta}^{\mathbf{x}}$ is bounded above by $$\sigma_{\theta}^{\boldsymbol{\frac{ \pi}{ 2}}}:=\sup\{\sigma>1:|\operatorname{Arg}(\zeta_{\boldsymbol{\frac{ \pi}{ 2}}}(\sigma+it))|=\theta\mbox{ for some }\sigma+it\in U\}$$
or
$$\sigma_{\theta}^{\boldsymbol{-\frac{ \pi}{ 2}}}:=\sup\{\sigma>1:|\operatorname{Arg}(\zeta_{\boldsymbol{-\frac{ \pi}{ 2}}}(\sigma+it))|=\theta\mbox{ for some }\sigma+it\in U\}$$
(in view of the Euler products, there is no doubt about the symmetry between $\operatorname{Arg}(\zeta_{\boldsymbol{\frac{ \pi}{ 2}}}(s))$ and $\operatorname{Arg}(\zeta_{\boldsymbol{-\frac{ \pi}{ 2}}}(s))$, which yields that $\sigma_{\theta}^{\boldsymbol{\frac{ \pi}{ 2}}}=\sigma_{\theta}^{\boldsymbol{-\frac{ \pi}{ 2}}}$).

\begin{lemma}\label{lemau}
Let $\theta\in [0,\pi]$ and $\mathbf{x}\in\mathbb{R}^{\infty}$.
Then
$
\sigma_{\theta}^{\mathbf{x}}\leq \sigma_{\theta}^{\boldsymbol{\frac{ \pi}{ 2}}}=\sigma_{\theta}^{\boldsymbol{-\frac{ \pi}{ 2}}}.
$
\end{lemma}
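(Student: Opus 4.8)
The plan is to reduce the claimed inequality to the key comparison in Lemma \ref{lemimco}(i) together with the equivalent reformulation \eqref{venga} of $\sigma_\theta^{\mathbf{x}}$ in terms of the mapping $A_{\zeta_{\mathbf{x}}}(s)$. First I would fix $\theta\in[0,\pi]$ and $\mathbf{x}\in\mathbb{R}^\infty$, and pick any point $\sigma+it\in U$ at which $A_{\zeta_{\mathbf{x}}}(\sigma+it)=\theta$; by \eqref{venga} the supremum $\sigma_\theta^{\mathbf{x}}$ is the supremum of the first coordinates $\sigma$ of such points, so it suffices to show that each such $\sigma$ satisfies $\sigma\le\sigma_\theta^{\boldsymbol{\frac{\pi}{2}}}$.

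The heart of the argument is the chain of inequalities coming from Lemma \ref{lemimco}. By part (i) we have $\theta = A_{\zeta_{\mathbf{x}}}(\sigma+it)\le A_{\zeta_{\boldsymbol{\frac{\pi}{2}}}}(\sigma)$. On the other hand, by parts (iii) and (iv), the function $A_{\zeta_{\boldsymbol{\frac{\pi}{2}}}}:(1,\infty)\to(0,\infty)$ is continuous, strictly decreasing, with $\lim_{\sigma\to1^+}A_{\zeta_{\boldsymbol{\frac{\pi}{2}}}}(\sigma)=\infty$ and $\lim_{\sigma\to\infty}A_{\zeta_{\boldsymbol{\frac{\pi}{2}}}}(\sigma)=0$; hence it is a bijection onto $(0,\infty)$, and there is a unique $\sigma^\ast>1$ with $A_{\zeta_{\boldsymbol{\frac{\pi}{2}}}}(\sigma^\ast)=\theta$ (when $\theta>0$; the degenerate case $\theta=0$ is handled separately, since then $\sigma_0^{\mathbf{x}}=\sigma_0^{\boldsymbol{\frac{\pi}{2}}}$ is essentially the abscissa governed by \cite[Theorem 18]{new} and the inequality $A_{\zeta_{\mathbf{x}}}(\sigma+it)=0$ forces the trivial situation). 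Because $A_{\zeta_{\boldsymbol{\frac{\pi}{2}}}}$ is decreasing, the inequality $\theta\le A_{\zeta_{\boldsymbol{\frac{\pi}{2}}}}(\sigma)=A_{\zeta_{\boldsymbol{\frac{\pi}{2}}}}$ evaluated at $\sigma$ forces $\sigma\le\sigma^\ast$. Finally, by the reformulation \eqref{venga} applied to $\mathbf{x}=\boldsymbol{\frac{\pi}{2}}$, the value $\sigma^\ast$ realizes $A_{\zeta_{\boldsymbol{\frac{\pi}{2}}}}=\theta$ and hence $\sigma^\ast\le\sigma_\theta^{\boldsymbol{\frac{\pi}{2}}}$; stringing these together gives $\sigma\le\sigma^\ast\le\sigma_\theta^{\boldsymbol{\frac{\pi}{2}}}$, and taking the supremum over all admissible $\sigma$ yields $\sigma_\theta^{\mathbf{x}}\le\sigma_\theta^{\boldsymbol{\frac{\pi}{2}}}$. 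The equality $\sigma_\theta^{\boldsymbol{\frac{\pi}{2}}}=\sigma_\theta^{\boldsymbol{-\frac{\pi}{2}}}$ is already noted in the text from the symmetry of the two Euler products.

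One subtlety I would be careful about is the passage between the argument $\operatorname{Arg}(\zeta_{\mathbf{x}}(s))$ (taken mod $(-\pi,\pi]$) and the unreduced quantity $A_{\zeta_{\mathbf{x}}}(s)$: the definition of $\sigma_\theta^{\mathbf{x}}$ is in terms of the principal argument, whereas the clean monotonicity lives at the level of $A_{\zeta_{\mathbf{x}}}$. This is exactly what \eqref{venga} resolves, so I would make sure to invoke it explicitly rather than argue directly with $\operatorname{Arg}$. The main obstacle is really bookkeeping rather than conceptual: ensuring that for $\zeta_{\boldsymbol{\frac{\pi}{2}}}$ the quantity $A_{\zeta_{\boldsymbol{\frac{\pi}{2}}}}(\sigma+it)$ for arbitrary $t$ is still dominated by $A_{\zeta_{\boldsymbol{\frac{\pi}{2}}}}(\sigma)$ — which is precisely Lemma \ref{lemimco}(i) with $\mathbf{x}=\boldsymbol{\frac{\pi}{2}}$ — so that the supremum defining $\sigma_\theta^{\boldsymbol{\frac{\pi}{2}}}$ is actually attained along the real axis and equals the $\sigma^\ast$ above. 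Once that identification is in place, the comparison is immediate.
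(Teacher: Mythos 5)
Your proof is correct and follows essentially the same route as the paper's: both arguments rest on the comparison $A_{\zeta_{\mathbf{x}}}(\sigma+it)\leq A_{\zeta_{\boldsymbol{\frac{\pi}{2}}}}(\sigma)$ from Lemma \ref{lemimco}(i), the reformulation \eqref{venga}, and the fact that $A_{\zeta_{\boldsymbol{\frac{\pi}{2}}}}$ is a decreasing bijection of $(1,\infty)$ onto $(0,\infty)$, so that the value $\theta$ is taken exactly at $\sigma^{\ast}=\sigma_{\theta}^{\boldsymbol{\frac{\pi}{2}}}$. The only difference is presentational: you argue directly and take a supremum, while the paper phrases the same comparison as a reductio ad absurdum.
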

\begin{proof}
Given $\sigma>1$ and $\mathbf{x}\in\mathbb{R}^{\infty}$, Lemma \ref{lemimco} assures that
$$A_{\zeta_{\mathbf{x}}}(\sigma+it)\leq A_{\zeta_{\boldsymbol{\frac{ \pi}{ 2}}}}(\sigma)=A_{\zeta_{\boldsymbol{-\frac{ \pi}{ 2}}}}(\sigma).$$
Moreover, in view of \eqref{venga}, it is satisfied $$\sigma_{\theta}^{\boldsymbol{\frac{ \pi}{ 2}}}=\sup\{\sigma>1:A_{\zeta_{\boldsymbol{\frac{ \pi}{ 2}}}}(\sigma+it)=\theta\mbox{ for some }\sigma+it\in U\}.$$
Now, given $\mathbf{x}_0\in \mathbb{R}^{\infty}$, suppose by reductio ad absurdum that $\sigma_{\theta}^{\mathbf{x}_0}>\sigma_{\theta}^{\boldsymbol{\frac{ \pi}{ 2}}}$. This implies
the existence of $s_0=\sigma_0+it_0\in U$, with $\sigma_0>\sigma_{\theta}^{\boldsymbol{\frac{ \pi}{ 2}}}$, satisfying the equality
$A_{\zeta_{\mathbf{x}_0}}(\sigma_0+it_0)=\theta.$ Hence
$$\theta=A_{\zeta_{\mathbf{x}_0}}(\sigma_0+it_0)\leq A_{\zeta_{\boldsymbol{\frac{ \pi}{ 2}}}}(\sigma_0)<\theta,$$
which is a contradiction.
\end{proof}

In particular, this result yields the existence of a real number $\sigma_{\pi}>1$ such that the images of any function equivalent to the Riemann zeta function on the half-plane $\{s\in\mathbb{C}:\operatorname{Re}s>\sigma_{\pi}\}$ cannot take negative real values.

For the following result, fixed $\theta\in [0,\pi]$, take the notation $$\sigma_{\theta}:=\sup\{\sigma>1:|\operatorname{Arg}(\zeta(\sigma+it))|=\theta\mbox{ for some }\sigma+it\in U\}.$$
\begin{proposition}\label{proop8}
Let $\theta\in [0,\pi]$ and $\mathbf{x}\in\mathbb{R}^{\infty}$. Then 
\begin{itemize}
\item[i)] $|\operatorname{Arg}(\zeta_{\mathbf{x}}(s))|<\theta$ for any $s=\sigma+it\in U$ with $\sigma>\sigma_{\theta}$;\\[-0.05cm]
\item[ii)] $\sigma_{\theta}\leq\sup\{\sigma>1:\Sum_{k\geq 1}\arctan(p_k^{-\sigma})=\theta\}.$
\end{itemize}
\end{proposition}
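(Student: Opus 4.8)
The plan is to determine $\sigma_{\theta}$ exactly. Write $\zeta=\zeta_{\mathbf{0}}$ (so $\sigma_{\theta}=\sigma_{\theta}^{\mathbf{0}}$, and abbreviate $A_{\zeta}:=A_{\zeta_{\mathbf{0}}}$), and put
$$g(\sigma):=A_{\zeta_{\boldsymbol{\frac{\pi}{2}}}}(\sigma)=\sum_{k\geq 1}\arctan(p_k^{-\sigma}),\qquad \sigma>1,$$
which by Lemma \ref{lemimco} (parts ii)--iv)) is a continuous, strictly decreasing bijection of $(1,\infty)$ onto $(0,\infty)$. We may assume $\theta\in(0,\pi]$: the case $\theta=0$ is degenerate, since $\operatorname{Arg}(\zeta(\sigma))=0$ for every $\sigma>1$ forces $\sigma_{0}=+\infty$, making (i) vacuous and (ii) trivial. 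Let $\sigma^{*}$ be the unique solution of $g(\sigma^{*})=\theta$; by Lemma \ref{lemimco} ii) the set $\{\sigma>1:\sum_{k\geq1}\arctan(p_k^{-\sigma})=\theta\}$ equals $\{\sigma^{*}\}$, so the right-hand side of (ii) is $\sigma^{*}$. I claim $\sigma_{\theta}=\sigma^{*}$; granting this, (ii) holds (with equality) and (i) is a one-line estimate.

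First, $\sigma_{\theta}\leq\sigma^{*}$. By Lemma \ref{lemau} with $\mathbf{x}=\mathbf{0}$ we have $\sigma_{\theta}=\sigma_{\theta}^{\mathbf{0}}\leq\sigma_{\theta}^{\boldsymbol{\frac{\pi}{2}}}$, so it suffices to show $\sigma_{\theta}^{\boldsymbol{\frac{\pi}{2}}}=\sigma^{*}$. Using the description \eqref{venga}: if $A_{\zeta_{\boldsymbol{\frac{\pi}{2}}}}(\sigma+it)=\theta$ for some $\sigma+it\in U$, then $\theta\leq A_{\zeta_{\boldsymbol{\frac{\pi}{2}}}}(\sigma)=g(\sigma)$ by Lemma \ref{lemimco} i), so $g(\sigma)\geq g(\sigma^{*})$ and $\sigma\leq\sigma^{*}$; and at $\sigma=\sigma^{*}$, $t=0$ one has $A_{\zeta_{\boldsymbol{\frac{\pi}{2}}}}(\sigma^{*})=g(\sigma^{*})=\theta$. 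Hence $\sigma_{\theta}^{\boldsymbol{\frac{\pi}{2}}}=\sigma^{*}$, which already proves (ii).

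The substantial point is the reverse inequality $\sigma_{\theta}\geq\sigma^{*}$. Fix $\sigma\in(1,\sigma^{*})$, so $g(\sigma)>\theta$, and choose $N$ with $\sum_{k\leq N}\arctan(p_k^{-\sigma})>\theta$. Since $\log p_1,\log p_2,\ldots$ are linearly independent over $\mathbb{Q}$ (the density fact underlying the equivalence results of \cite{SV,new}; compare the use of \cite[Theorem 18]{new} made earlier in this paper), Kronecker's approximation theorem yields a real $t$ for which $e^{-it\log p_k}$ is as close to $i$ as we wish for $k=1,\dots,N$; by continuity of the summands of \eqref{champa2} this gives
$$A_{\zeta}(\sigma+it)\ \geq\ \sum_{k=1}^{N}\bigl|\operatorname{Arg}(1-e^{-it\log p_k}p_k^{-\sigma})\bigr|\ >\ \theta .$$
On the other hand $A_{\zeta}(\sigma)=\sum_{k\geq1}|\operatorname{Arg}(1-p_k^{-\sigma})|=0$, and $t\mapsto A_{\zeta}(\sigma+it)$ is continuous (uniform convergence on reduced strips by the Weierstrass argument in the proof of Lemma \ref{lemimco} iv)). The intermediate value theorem along the line $\operatorname{Re}s=\sigma$ then produces $t'$ with $A_{\zeta}(\sigma+it')=\theta$, so $\sigma\leq\sigma_{\theta}$ by \eqref{venga}. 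Letting $\sigma\uparrow\sigma^{*}$ gives $\sigma_{\theta}\geq\sigma^{*}$, and therefore $\sigma_{\theta}=\sigma^{*}$.

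Finally, for (i): let $s=\sigma+it\in U$ with $\sigma>\sigma_{\theta}=\sigma^{*}$. From $\operatorname{Arg}(\zeta_{\mathbf{x}}(s))=-\sum_{k\geq1}\operatorname{Arg}(1-e^{ix_k}p_k^{-s})$ modulo $(-\pi,\pi]$ (recalled at the start of Section \ref{sv}) and the triangle inequality, $|\operatorname{Arg}(\zeta_{\mathbf{x}}(s))|\leq A_{\zeta_{\mathbf{x}}}(s)$; combining this with Lemma \ref{lemimco} i) and the strict monotonicity in Lemma \ref{lemimco} iv),
$$|\operatorname{Arg}(\zeta_{\mathbf{x}}(s))|\ \leq\ A_{\zeta_{\mathbf{x}}}(s)\ \leq\ A_{\zeta_{\boldsymbol{\frac{\pi}{2}}}}(\sigma)\ =\ g(\sigma)\ <\ g(\sigma^{*})\ =\ \theta ,$$
which is (i). The one genuinely delicate step is the inequality $\sigma_{\theta}\geq\sigma^{*}$: one has to witness, on every vertical line to the right of $\operatorname{Re}s=1$ and arbitrarily close to it, that the ordinary Riemann zeta function already realizes the extremal argument growth of $\zeta_{\boldsymbol{\frac{\pi}{2}}}$, and this is precisely where Kronecker's theorem (equivalently, the Bohr-type equivalence input of \cite{new}) is indispensable — the bound $\sigma_{\theta}\leq\sigma^{*}$ of the first step, taken by itself, leaves the range $\sigma\in(\sigma_{\theta},\sigma^{*}]$ of (i) uncovered.
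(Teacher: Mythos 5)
Your proof is correct in substance but takes a genuinely different, and stronger, route than the paper's. The paper proves i) in two lines: by definition of $\sigma_{\theta}$ the claim holds for $\zeta$ itself, and \cite[Theorem 18]{new} (equality of the image sets of equivalent series on half-planes) transfers it to every $\zeta_{\mathbf{x}}$; for ii) it simply combines Lemma \ref{lemau} with the identity $A_{\zeta_{\boldsymbol{\frac{\pi}{2}}}}(\sigma)=\sum_{k}\arctan(p_k^{-\sigma})$ --- your upper-bound step $\sigma_{\theta}\leq\sigma^{*}$ is essentially that argument. What you add is the reverse inequality $\sigma_{\theta}\geq\sigma^{*}$, obtained from Kronecker's theorem applied to $\log p_1,\dots,\log p_N$ together with the intermediate value theorem along a vertical line; this pins down $\sigma_{\theta}=\sigma^{*}$ exactly (which the paper never claims) and is what allows you to prove i) by the pointwise chain $|\operatorname{Arg}(\zeta_{\mathbf{x}}(s))|\leq A_{\zeta_{\mathbf{x}}}(s)\leq A_{\zeta_{\boldsymbol{\frac{\pi}{2}}}}(\sigma)<\theta$ rather than by invoking \cite[Theorem 18]{new}. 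Your approach buys an exact formula for $\sigma_{\theta}$ and a proof of i) that is uniform in $\mathbf{x}$ and bypasses the Bohr-type equivalence theorem, at the cost of the Kronecker input, whose necessity you correctly identify; the paper's proof is shorter but only one-sided. Two caveats. First, your lower bound leans on \eqref{venga} (the identification of the supremum defined via $|\operatorname{Arg}|$ with the one defined via $A$), which the paper asserts without proof; you could avoid this dependence by noting that with your Kronecker choice of $t$ the quantities $\operatorname{Arg}(1-e^{-it\log p_k}p_k^{-\sigma})$, $k\leq N$, all have the same sign, so after also making the tail $\sum_{k>N}\arctan(p_k^{-\sigma})$ small the continuous argument of $\zeta$ itself exceeds $\theta$, and the intermediate value theorem can then be run on $\operatorname{Arg}(\zeta)$ directly. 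Second, for $\theta=0$ the right-hand side of ii) is $\sup\emptyset$, so calling that case ``trivial'' is generous; but this degeneracy is present in the paper's statement as well and does not affect the substantive range $\theta\in(0,\pi]$.
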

\begin{proof}
i) By definition of $\sigma_{\theta}$, it is plain that
$$|\operatorname{Arg}(\zeta(s))|<\theta\ \forall s=\sigma+it\in U: \sigma>\sigma_{\theta}.$$
Now, the result follows from \cite[Theorem 18]{new}, as
$$\bigcup_{\sigma>\sigma_{\theta}}\operatorname{Img}\left(\zeta(\sigma+it)\right)=
\bigcup_{\sigma>\sigma_{\theta}}\operatorname{Img}\left(\zeta_{\mathbf{x}}(\sigma+it)\right).$$

ii) We already know that
$$\sigma_{\theta}=\sup\{\sigma>1:|\operatorname{Arg}(\zeta(\sigma+it))|=\theta\}=
\sup\{\sigma>1:A_{\zeta}(\sigma+it)=\theta\}.$$
By Lemma \ref{lemimco}, it is accomplished that
$$A_{\zeta_{\boldsymbol{\frac{ \pi}{ 2}}}}(s)=A_{\zeta_{\boldsymbol{-\frac{ \pi}{ 2}}}}(s)=\sum_{k\geq 1}|\arctan(p_k^{-\sigma})|=\sum_{k\geq 1}\arctan(p_k^{-\sigma}).$$
Finally, the result follows from Lemma \ref{lemau}.
\end{proof}

We next focus our attention on the vortex-like behaviour of the Riemann zeta function, and of every function that is equivalent to it. For this reason, we first show the following two preliminary results.

\begin{proposition}\label{thm}
Let $\mathbf{x}\in\mathbb{R}^{\infty}$, $\sigma_0>1$, $\theta\in(-\pi,\pi]$ and $n\in\mathbb{N}$. Then there exists a real number $\rho_n$ with $1<\rho_n<\sigma_0$ such that $\operatorname{Arg}(\zeta_{\mathbf{x}}(s))=\theta$ is satisfied for at least $n$ distinct values in the vertical strip $\{s\in \mathbb{C}:\rho_n<\operatorname{Re}s<\sigma_0\}$.
\end{proposition}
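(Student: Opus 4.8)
The plan is to transfer the spiralling of $\zeta_{\boldsymbol{\frac{\pi}{2}}}$ on the real axis, already isolated in Lemma~\ref{kema}, to an arbitrary $\zeta_{\mathbf x}$ by means of Bohr's equivalence relation, and then to check that the winding survives a small uniform perturbation. As recalled in Section~\ref{sect2}, for every $\mathbf x\in\mathbb R^{\infty}$ the function $\zeta_{\mathbf x}$ is equivalent to $\zeta$; hence all the $\zeta_{\mathbf x}$ are pairwise equivalent, and in particular $\zeta_{\mathbf x}\sim\zeta_{\boldsymbol{\frac{\pi}{2}}}$. By \cite[Theorem~18]{new} (in the spirit of \cite{SV,SV2}) there is then a sequence $(\tau_m)_{m\ge1}$ of real numbers such that $\zeta_{\mathbf x}(s+i\tau_m)\to\zeta_{\boldsymbol{\frac{\pi}{2}}}(s)$ uniformly on every reduced strip of $U$.

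First I would fix the abscissa $\rho_n$ using the monotone function from Lemma~\ref{lemimco}. On $(1,\infty)$ a continuous determination of the argument of $\zeta_{\boldsymbol{\frac{\pi}{2}}}(\sigma)$ is
\[
\Phi(\sigma):=-\sum_{k\ge1}\operatorname{Arg}\!\left(1-ip_k^{-\sigma}\right)=\sum_{k\ge1}\arctan\!\left(p_k^{-\sigma}\right)=A_{\zeta_{\boldsymbol{\frac{\pi}{2}}}}(\sigma),
\]
which by parts iii) and iv) of Lemma~\ref{lemimco} is continuous, strictly decreasing, and a bijection of $(1,\infty)$ onto $(0,\infty)$. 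So I may pick $\rho_n\in(1,\sigma_0)$ with $\Phi(\rho_n)-\Phi(\sigma_0)>2\pi(n+2)$. Moreover, by Proposition~\ref{cormontel}, $|\zeta_{\boldsymbol{\frac{\pi}{2}}}(\sigma)|\ge\zeta_{\boldsymbol{\pi}}(\sigma_0)>0$ for all $\sigma\in[\rho_n,\sigma_0]$.

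Next I would transport the winding to $\zeta_{\mathbf x}$. Because $\zeta_{\boldsymbol{\frac{\pi}{2}}}$ is bounded away from $0$ on the compact segment $[\rho_n,\sigma_0]$, the uniform convergence above lets me fix one index $m$ for which $\zeta_{\mathbf x}(\,\cdot\,+i\tau_m)$ is zero-free on $[\rho_n,\sigma_0]$ and the ratio $q(\sigma):=\zeta_{\mathbf x}(\sigma+i\tau_m)/\zeta_{\boldsymbol{\frac{\pi}{2}}}(\sigma)$ satisfies $|q(\sigma)-1|<\tfrac12$ there. Then $h(\sigma):=\operatorname{Arg} q(\sigma)$ is continuous with $|h(\sigma)|<\tfrac{\pi}{6}$, and $g:=\Phi+h$ is a continuous argument of $\sigma\mapsto\zeta_{\mathbf x}(\sigma+i\tau_m)$ on $[\rho_n,\sigma_0]$. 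Its total increment satisfies $|g(\sigma_0)-g(\rho_n)|\ge\big(\Phi(\rho_n)-\Phi(\sigma_0)\big)-\tfrac{\pi}{3}>2\pi(n+1)$, so, $g$ being continuous, its image contains the whole open interval between $g(\rho_n)$ and $g(\sigma_0)$, hence at least $n+1$ numbers $\theta+2\pi k$ with $k\in\mathbb Z$. For each such number the intermediate value theorem yields a $\sigma\in(\rho_n,\sigma_0)$ with $g(\sigma)=\theta+2\pi k$; these $\sigma$ are pairwise distinct, and at each of them $\operatorname{Arg}\big(\zeta_{\mathbf x}(\sigma+i\tau_m)\big)=\theta$ because $\theta\in(-\pi,\pi]$. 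The corresponding $n+1\ge n$ points $\sigma+i\tau_m$ lie in $\{s\in\mathbb C:\rho_n<\operatorname{Re}s<\sigma_0\}$, which is the assertion.

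The only genuine difficulty is the transport step: it needs a \emph{single} translate $i\tau_m$ bringing $\zeta_{\mathbf x}$ uniformly close to $\zeta_{\boldsymbol{\frac{\pi}{2}}}$ along the whole segment $[\rho_n,\sigma_0]$, together with the fact that the winding of a zero-free plane curve is stable under uniform perturbation --- which is exactly where the strictly positive lower bound $\zeta_{\boldsymbol{\pi}}(\sigma_0)$ of Proposition~\ref{cormontel} enters. If one prefers not to invoke \cite[Theorem~18]{new} here, the translate can be produced directly: by Lemma~\ref{lemcu}, truncate the Euler product after $N$ factors with $N$ so large that the tail is within $\tfrac1{10}$ of $1$ on $\operatorname{Re}s\ge\rho_n$ and $\sum_{k\le N}\arctan(p_k^{-\rho_n})-\sum_{k\le N}\arctan(p_k^{-\sigma_0})>2\pi(n+2)$; since $\log p_1,\dots,\log p_N$ are linearly independent over $\mathbb Q$, the image of the line $t\mapsto(t\log p_1,\dots,t\log p_N)$ is dense in $(\mathbb R/2\pi\mathbb Z)^N$, so one may choose $\tau_m$ --- indeed a relatively dense set of them --- with $e^{i(x_k-\tau_m\log p_k)}$ as close to $i$ as wished for every $k\le N$, which makes $\zeta_{\mathbf x}(\sigma+i\tau_m)$ uniformly close on $[\rho_n,\sigma_0]$ to $\prod_{k\le N}(1-ip_k^{-\sigma})^{-1}$; the argument then finishes as before. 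Everything else --- the bookkeeping between continuous and principal arguments, and counting points of an arithmetic progression inside an interval --- is routine.
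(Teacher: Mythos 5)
Your proof is correct, but it takes a different route from the one the paper uses for this proposition. The paper argues softly: it takes the $n$ values $\sigma\in(\rho_n,\sigma_0)$ where $\operatorname{Arg}(\zeta_{\boldsymbol{\frac{\pi}{2}}}(\sigma))=\theta$ (from Lemma \ref{kema}) and then invokes the Bohr-type equivalence theorem \cite[Theorem 18]{new}, i.e.\ the equality of the image sets $\bigcup_{\sigma}\operatorname{Img}(\zeta_{\mathbf{x}}(\sigma+it))=\bigcup_{\sigma}\operatorname{Img}(\zeta_{\boldsymbol{\frac{\pi}{2}}}(\sigma+it))$ over the strip, to transfer the conclusion to $\zeta_{\mathbf{x}}$ (strictly, one should apply this to $n$ disjoint substrips, each containing one of the $\sigma$'s, as the paper does in Proposition \ref{cor8}). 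You instead use the approximation-by-vertical-translates statement --- a single $\tau_m$ with $\zeta_{\mathbf{x}}(\cdot+i\tau_m)$ uniformly close to $\zeta_{\boldsymbol{\frac{\pi}{2}}}$ on $[\rho_n,\sigma_0]$ --- combined with the lower bound of Proposition \ref{cormontel} and stability of winding under small zero-free perturbation. That is exactly the machinery the paper saves for the stronger Theorem \ref{thm2} (there it is cited as \cite[Corollary 5]{SV}, not \cite[Theorem 18]{new}; your attribution is slightly off, though your Kronecker--Weyl construction of the translates makes the step self-contained anyway). What your route buys is more: all $n$ solutions lie on a single horizontal line $\operatorname{Im}s=\tau_m$, the winding of the curve $\sigma\mapsto\zeta_{\mathbf{x}}(\sigma+i\tau_m)$ is itself controlled, and the argument is quantitative via the inequality $\Phi(\rho_n)-\Phi(\sigma_0)>2\pi(n+2)$ (the paper's \eqref{pointy}); the cost is heavier tools than this proposition strictly needs. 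Two trivial touch-ups: in your self-contained variant, choose $\rho_n$ with a little extra slack (say $2\pi(n+3)$) so that the truncated sum still exceeds $2\pi(n+2)$ after discarding the tail; and note that the misattributed citation should read \cite[Corollary 5]{SV}.
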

\begin{proof}
Given $\varepsilon>0$, we already know from Lemma \ref{kema} that the parametrized curve originated from the points $\left(\operatorname{Re}(\zeta_{\boldsymbol{\frac{ \pi}{ 2}}}(\sigma)),\operatorname{Im}(\zeta_{\boldsymbol{\frac{ \pi}{ 2}}}(\sigma))\right)$, with $\sigma\in(1,1+\varepsilon)$, makes infinitely many turns around the origin.
In particular, given $\sigma_0>1$, $\theta\in(-\pi,\pi]$ and $n\in\mathbb{N}$, there exists a real number $\rho_n$ with $1<\rho_n<\sigma_0$ such that $\operatorname{Arg}(\zeta_{\boldsymbol{\frac{ \pi}{ 2}}}(\sigma))=\theta$ is satisfied for at least $n$ distinct values in $(\rho_n,\sigma_0)$.
Now, given $\mathbf{x}\in\mathbb{R}^{\infty}$, we deduce from \cite[Theorem 18]{new} that
$$\bigcup_{\sigma\in (\rho_n,\sigma_0)}\operatorname{Img}\left(\zeta_{\mathbf{x}}(\sigma+it)\right)=
\bigcup_{\sigma\in (\rho_n,\sigma_0)}\operatorname{Img}\left(\zeta_{\boldsymbol{\frac{ \pi}{ 2}}}(\sigma+it)\right).$$
Then $\operatorname{Arg}(\zeta_{\mathbf{x}}(s))=\theta$ is satisfied for at least $n$ distinct values in the vertical strip $\{s\in \mathbb{C}:\rho_n<\operatorname{Re}s<\sigma_0\}$ and the result follows.
\end{proof}

\begin{proposition}\label{cor8}
Let $\mathbf{x}\in\mathbb{R}^{\infty}$, $\sigma_0>1$ and $\theta\in(-\pi,\pi]$. Then $\operatorname{Arg}(\zeta_{\mathbf{x}}(s))=\theta$ is satisfied for many infinitely values of $s\in \mathbb{C}$ with $1<\operatorname{Re}s<\sigma_0$.
\end{proposition}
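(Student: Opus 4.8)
The plan is to read off this statement as an immediate consequence of Proposition~\ref{thm}, where the substantive work has already been carried out. Fix $\mathbf{x}\in\mathbb{R}^{\infty}$, $\sigma_0>1$ and $\theta\in(-\pi,\pi]$, and consider the (fixed) set
\[
E:=\{s\in\mathbb{C}:1<\operatorname{Re}s<\sigma_0\ \text{and}\ \operatorname{Arg}(\zeta_{\mathbf{x}}(s))=\theta\}.
\]
It suffices to show that $E$ is infinite. For each $n\in\mathbb{N}$, Proposition~\ref{thm} furnishes a real number $\rho_n$ with $1<\rho_n<\sigma_0$ such that $\operatorname{Arg}(\zeta_{\mathbf{x}}(s))=\theta$ holds for at least $n$ distinct values of $s$ in the strip $\{s\in\mathbb{C}:\rho_n<\operatorname{Re}s<\sigma_0\}$; since this strip is contained in $\{s\in\mathbb{C}:1<\operatorname{Re}s<\sigma_0\}$, those $n$ values all lie in $E$. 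Hence $E$ contains a subset of cardinality at least $n$ for every $n\in\mathbb{N}$, so (if $E$ were finite with $N$ elements, the case $n=N+1$ would be contradicted) $E$ must be infinite, which is precisely the claim.

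Alternatively, one could retrace the proof of Proposition~\ref{thm} directly: choose $\varepsilon>0$ with $1+\varepsilon<\sigma_0$; by Lemma~\ref{kema} the parametrized curve $\sigma\mapsto(\operatorname{Re}\zeta_{\boldsymbol{\frac{\pi}{2}}}(\sigma),\operatorname{Im}\zeta_{\boldsymbol{\frac{\pi}{2}}}(\sigma))$ on $(1,1+\varepsilon)$ winds around the origin infinitely often, so there are infinitely many $\sigma\in(1,1+\varepsilon)\subset(1,\sigma_0)$ with $\operatorname{Arg}(\zeta_{\boldsymbol{\frac{\pi}{2}}}(\sigma))=\theta$. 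Because $|\zeta_{\boldsymbol{\frac{\pi}{2}}}(\sigma)|=\prod_{k\geq1}(1+p_k^{-2\sigma})^{-1/2}$ is strictly decreasing in $\sigma$, the corresponding values $\zeta_{\boldsymbol{\frac{\pi}{2}}}(\sigma)$ are pairwise distinct, and then the identity
\[
\bigcup_{\sigma\in(1,\sigma_0)}\operatorname{Img}\left(\zeta_{\mathbf{x}}(\sigma+it)\right)=\bigcup_{\sigma\in(1,\sigma_0)}\operatorname{Img}\left(\zeta_{\boldsymbol{\frac{\pi}{2}}}(\sigma+it)\right)
\]
provided by \cite[Theorem 18]{new} lets one pull each of these infinitely many distinct values back through the function $\zeta_{\mathbf{x}}$ to a distinct $s$ with $1<\operatorname{Re}s<\sigma_0$ and $\operatorname{Arg}(\zeta_{\mathbf{x}}(s))=\theta$.

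As for obstacles, there is essentially none: the statement is just a repackaging of Proposition~\ref{thm}. The only points requiring a moment's attention are the elementary observation that ``at least $n$ solutions for every $n$'' forces the full solution set to be infinite, and---should one prefer the direct route---verifying that the values produced on the real axis are genuinely distinct, which is why the strict monotonicity of $|\zeta_{\boldsymbol{\frac{\pi}{2}}}(\sigma)|$ is used, so that distinct target values correspond to distinct preimages $s$ of $\zeta_{\mathbf{x}}$.
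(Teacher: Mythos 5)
Your first argument is correct and is a genuinely different (and shorter) route than the paper's. The paper does not derive Proposition~\ref{cor8} from Proposition~\ref{thm}; instead it reruns the underlying construction: it takes the infinite set $C\subset(1,1+\varepsilon)$ of isolated solutions of $\operatorname{Arg}(\zeta_{\boldsymbol{\frac{\pi}{2}}}(\sigma))=\theta$ supplied by Lemma~\ref{kema}, surrounds each point of $C$ by a thin vertical strip on which the curve makes one full turn, and applies \cite[Theorem 18]{new} strip by strip to transfer at least one solution into each strip for $\zeta_{\mathbf{x}}$. Your observation that ``at least $n$ solutions for every $n$'' (which is exactly the content of Proposition~\ref{thm}) already forces the solution set to be infinite makes Proposition~\ref{cor8} an immediate corollary and avoids repeating the strip argument; the paper's version, on the other hand, additionally localizes the solutions near the points of $C$, which is in the spirit of what is later needed in Theorem~\ref{thm2}. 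Your alternative, direct route essentially reproduces the paper's proof; the only blemish there is the claim that $|\zeta_{\boldsymbol{\frac{\pi}{2}}}(\sigma)|=\prod_{k\geq1}(1+p_k^{-2\sigma})^{-1/2}$ is strictly \emph{decreasing} --- each factor increases with $\sigma$, so this modulus is in fact strictly increasing on $(1,\infty)$ --- but since all you use is strict monotonicity (hence injectivity, hence distinctness of the target values), the argument is unaffected.
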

\begin{proof}
Fixed $\varepsilon>0$, we will again use the fact that the curve originated from the images $\zeta_{\boldsymbol{\frac{ \pi}{ 2}}}(\sigma)$, with $\sigma\in(1,1+\varepsilon)$, makes infinitely many turns around the origin (Lemma \ref{kema}). In this way, given $\theta\in(-\pi,\pi]$,  there exists a set $C\subset (1,1+\varepsilon)$ consisting of infinitely many isolated values $\sigma$ satisfying $\operatorname{Arg}(\zeta_{\boldsymbol{\frac{ \pi}{ 2}}}(\sigma))=\theta$.
Now, given $\sigma_0\in C$, by continuity choose $\delta_{\sigma_0}>0$ such that the parametrized curve traced by the points $\left(\operatorname{Re}(\zeta_{\boldsymbol{\frac{ \pi}{ 2}}}(\sigma)),\operatorname{Im}(\zeta_{\boldsymbol{\frac{ \pi}{ 2}}}(\sigma))\right)$, with $\sigma\in(\sigma_0-\delta_{\sigma_0},\sigma_0+\delta_{\sigma_0})$, makes one turn around the origin.
Thus we deduce from \cite[Theorem 18]{new} that
$$\bigcup_{\sigma\in (\sigma_0-\delta_{\sigma_0},\sigma_0+\delta_{\sigma_0})}\operatorname{Img}\left(\zeta_{\mathbf{x}}(\sigma+it)\right)=
\bigcup_{\sigma\in (\sigma_0-\delta_{\sigma_0},\sigma_0+\delta_{\sigma_0})}\operatorname{Img}\left(\zeta_{\boldsymbol{\frac{ \pi}{ 2}}}(\sigma+it)\right).$$
Hence $\operatorname{Arg}(\zeta_{\mathbf{x}}(s))=\theta$ is satisfied at least once in the vertical strip $E_{\sigma_0}:=\{s\in\mathbb{C}:\sigma_0-\delta_{\sigma_0}<\operatorname{Re}s<\sigma_0+\delta_{\sigma_0}\}$.
Finally, by varying $\sigma_0$ in the set $C$, the result follows.
\end{proof}

Finally, by using \cite[Theorem 2]{SV}, we next improve the results above in the following sense.
\begin{theorem}\label{thm2}
Let $\mathbf{x}\in\mathbb{R}^{\infty}$, $\sigma_0>1$ and $n\in\mathbb{N}$. Then there exists a relatively dense set of real numbers $\{t_{n,m}\}_{m\geq 1}$ such that, for each $m=1,2,\ldots$, the parametrized curve traced by the points  $(\operatorname{Re}(\zeta_{\mathbf{x}}(\sigma+it_{n,m})),\operatorname{Im}(\zeta_{\mathbf{x}}(\sigma+it_{n,m})))$, where $\sigma\in (1,\sigma_0)$, makes at least $n$ turns around the origin.
\end{theorem}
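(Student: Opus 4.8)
The plan is to transfer the unbounded winding of the extremal function $\zeta_{\boldsymbol{\frac{\pi}{2}}}$ on the real half-line (Lemma \ref{kema}) to an arbitrary equivalent function $\zeta_{\mathbf{x}}$ by means of a relatively dense family of vertical translations, using that $\zeta_{\mathbf{x}}$ is almost periodic on every closed half-strip $\operatorname{Re}s\ge 1+\delta$ together with \cite[Theorem 2]{SV}. First I fix the relevant scales. Given $\sigma_0>1$ and $n\in\mathbb{N}$, put $\sigma_2:=(1+\sigma_0)/2$; by parts (iii) and (iv) of Lemma \ref{lemimco} the function $A_{\zeta_{\boldsymbol{\frac{\pi}{2}}}}(\sigma)=\sum_{k\ge1}\arctan(p_k^{-\sigma})$ is continuous and strictly decreasing on $(1,\infty)$ and tends to $\infty$ as $\sigma\to1^{+}$, so I may choose $\sigma_1\in(1,\sigma_2)$ with $A_{\zeta_{\boldsymbol{\frac{\pi}{2}}}}(\sigma_1)-A_{\zeta_{\boldsymbol{\frac{\pi}{2}}}}(\sigma_2)>2\pi n+\pi$. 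As noted in the proof of Lemma \ref{kema}, $\operatorname{Arg}(\zeta_{\boldsymbol{\frac{\pi}{2}}}(\sigma))=A_{\zeta_{\boldsymbol{\frac{\pi}{2}}}}(\sigma)\ (\mathrm{mod}\ (-\pi,\pi])$, and by continuity $\sigma\mapsto A_{\zeta_{\boldsymbol{\frac{\pi}{2}}}}(\sigma)$ is a continuous branch of the argument along the curve $\gamma_0:\sigma\mapsto\zeta_{\boldsymbol{\frac{\pi}{2}}}(\sigma)$, $\sigma\in[\sigma_1,\sigma_2]$; hence the net variation of the argument of $\gamma_0$ has modulus larger than $2\pi n+\pi$. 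Set $r:=\zeta_{\boldsymbol{\pi}}(\sigma_1)>0$; by Proposition \ref{cormontel} applied to the closed strip $K:=\{s\in\mathbb{C}:\sigma_1\le\operatorname{Re}s\le\sigma_2\}$ (whose least real part is $\sigma_1$), $|\zeta_{\mathbf{y}}(s)|\ge r$ on $K$ for every $\mathbf{y}\in\mathbb{R}^{\infty}$; in particular $\gamma_0$ avoids the disc of radius $r$ about $0$. Finally fix $\varepsilon:=r/2$.

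Next I build the relatively dense set of translates. Since the Dirichlet series of $\zeta_{\mathbf{x}}$ converges uniformly on $\operatorname{Re}s\ge1+\delta$, on the strip $K$ the function $\zeta_{\mathbf{x}}(\sigma+it)$ is a uniform limit of exponential polynomials in $t$, hence it is, uniformly in $\sigma\in[\sigma_1,\sigma_2]$, a Bohr almost periodic function of $t$; therefore the set
$$T:=\Bigl\{\tau\in\mathbb{R}:\ \sup_{s\in K}\bigl|\zeta_{\mathbf{x}}(s+i\tau)-\zeta_{\mathbf{x}}(s)\bigr|<\tfrac{\varepsilon}{2}\Bigr\}$$
is relatively dense. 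On the other hand, $\zeta_{\boldsymbol{\frac{\pi}{2}}}$ is equivalent to $\zeta_{\mathbf{x}}$, so \cite[Theorem 2]{SV} provides a real number $\tau^{*}$ with $\sup_{s\in K}|\zeta_{\mathbf{x}}(s+i\tau^{*})-\zeta_{\boldsymbol{\frac{\pi}{2}}}(s)|<\tfrac{\varepsilon}{2}$. For $\tau\in T$ and $t:=\tau^{*}+\tau$, using that $K$ is invariant under imaginary translations, the triangle inequality gives
$$\sup_{s\in K}\bigl|\zeta_{\mathbf{x}}(s+it)-\zeta_{\boldsymbol{\frac{\pi}{2}}}(s)\bigr|\le \sup_{w\in K}\bigl|\zeta_{\mathbf{x}}(w+i\tau)-\zeta_{\mathbf{x}}(w)\bigr|+\sup_{s\in K}\bigl|\zeta_{\mathbf{x}}(s+i\tau^{*})-\zeta_{\boldsymbol{\frac{\pi}{2}}}(s)\bigr|<\varepsilon .$$
The set $\tau^{*}+T$ is again relatively dense; I take $\{t_{n,m}\}_{m\ge1}$ to be a countable relatively dense subset of it.

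Now I verify the winding for each $t=t_{n,m}$. Let $\gamma_t(\sigma):=\zeta_{\mathbf{x}}(\sigma+it)$, $\sigma\in[\sigma_1,\sigma_2]$. By Proposition \ref{cormontel}, $|\gamma_t(\sigma)|\ge r$, so $\gamma_t$ omits the origin; moreover $|\gamma_t(\sigma)-\gamma_0(\sigma)|<\varepsilon=r/2\le |\gamma_0(\sigma)|/2$, so $\gamma_t(\sigma)/\gamma_0(\sigma)$ lies in the disc $\{|z-1|<1/2\}\subset\{\operatorname{Re}z>0\}$ for all $\sigma$. Hence $\arg\bigl(\gamma_t/\gamma_0\bigr)$ has a continuous determination with values in $(-\tfrac{\pi}{2},\tfrac{\pi}{2})$, so the net variation of the argument of $\gamma_t$ over $[\sigma_1,\sigma_2]$ differs from that of $\gamma_0$ by less than $\pi$, and therefore exceeds $2\pi n$ in modulus by the choice of $\sigma_1$. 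Consequently, for every $\theta\in(-\pi,\pi]$ the equation $\operatorname{Arg}(\zeta_{\mathbf{x}}(\sigma+it))=\theta$ has at least $n$ solutions $\sigma\in[\sigma_1,\sigma_2]\subset(1,\sigma_0)$, i.e. the parametrized curve $\sigma\mapsto(\operatorname{Re}\zeta_{\mathbf{x}}(\sigma+it_{n,m}),\operatorname{Im}\zeta_{\mathbf{x}}(\sigma+it_{n,m}))$, $\sigma\in(1,\sigma_0)$, makes at least $n$ turns around the origin.

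The delicate point, on which I would spend the most care, is the interplay between the quantitative winding--stability estimate and obtaining \emph{relative density} rather than merely a sequence of good translates: one must fix $\sigma_1$ close enough to $1$ that $\zeta_{\boldsymbol{\frac{\pi}{2}}}$ winds strictly more than $n+\tfrac12$ times on $[\sigma_1,\sigma_2]$, so that the error of size $<\pi$ produced by a uniform $\varepsilon$-approximation cannot erase $n$ of those turns, and one must choose $\varepsilon$ below the uniform lower bound $r=\zeta_{\boldsymbol{\pi}}(\sigma_1)$ for $|\zeta_{\mathbf{y}}|$ on $K$ before invoking \cite[Theorem 2]{SV} and the almost periodicity of $\zeta_{\mathbf{x}}$; it is precisely the order of the choices $\sigma_2\to\sigma_1\to r\to\varepsilon\to\tau^{*}\to T$ that makes the argument close up. (One could also hope that \cite[Theorem 2]{SV} already yields a relatively dense set of translates directly, in which case the appeal to almost periodicity of $\zeta_{\mathbf{x}}$ becomes unnecessary.)
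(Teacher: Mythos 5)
Your proposal is correct and follows essentially the same strategy as the paper: transfer the unbounded argument variation of $\zeta_{\boldsymbol{\frac{\pi}{2}}}(\sigma)$ near $\sigma=1$ (Lemma \ref{kema} and \eqref{pointy}) to $\zeta_{\mathbf{x}}$ along a relatively dense set of vertical translates on which the two functions are uniformly close on a reduced strip. The only differences are cosmetic and in fact tighten the paper's argument: the paper obtains the relatively dense set of translates directly from \cite[Corollary 5]{SV} (so your detour through the Bohr almost periodicity of $\zeta_{\mathbf{x}}$ plus a single translate $\tau^{*}$ is not needed, though it is valid), and your Rouch\'e-type estimate $|\gamma_t-\gamma_0|<\tfrac{1}{2}|\gamma_0|$ forcing the argument variations to differ by less than $\pi$ is a cleaner, fully quantitative version of the paper's concluding paragraph about locating solutions of $\operatorname{Arg}(\zeta_{\mathbf{x}}(\sigma+it_{n,m}))=\theta$ near the points of $C$.
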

\begin{proof}
It is worth noting that \cite[Corollary 5]{SV} assures the existence of an increasing unbounded sequence $\{\tau_j\}_{j\geq 1}$ of positive numbers such that the sequence of functions $\{\zeta_{\mathbf{x}}(s+i\tau_j)\}_{j\geq 1}$ converges uniformly to $\zeta_{\boldsymbol{\frac{ \pi}{ 2}}}(s)$ on every reduced strip of $U=\{s\in\mathbb{C}:\operatorname{Re}s > 1\}$. In fact, given $\sigma_0>1$, $\varepsilon>0$ and a reduced strip $U_{\varepsilon}=\{s\in\mathbb{C}:1+\varepsilon<\operatorname{Re}s<\sigma_0\}\subset U$, also by \cite[Corollary 5]{SV} there exists a relatively dense set of real numbers $\{t_j\}_{j\geq 1}$ such that
$$\left|\zeta_{\mathbf{x}}(s+it_j)-\zeta_{\boldsymbol{\frac{ \pi}{ 2}}}(s)\right|<\varepsilon\mbox{ for any }s\in U_{\varepsilon}.$$
In particular, we have
$$\left|\zeta_{\mathbf{x}}(\sigma+it_j)-\zeta_{\boldsymbol{\frac{ \pi}{ 2}}}(\sigma)\right|<\varepsilon\mbox{ for any }\sigma\in (1+\varepsilon,\sigma_0).$$
Equivalently,
\begin{equation}\label{equi}
\zeta_{\mathbf{x}}(\sigma+it_j)=\zeta_{\boldsymbol{\frac{ \pi}{ 2}}}(\sigma)+\delta_{j},\ \mbox{with }|\delta_j|<\varepsilon,\ \sigma\in (1+\varepsilon,\sigma_0).
\end{equation}
Likewise, given $\theta\in (-\pi,\pi]$, we deduce from Lemma \ref{kema} that $\operatorname{Arg}(\zeta_{\boldsymbol{\frac{ \pi}{ 2}}}(\sigma))=\theta$ is attained for a set $C$ of infinitely many values $\sigma$ in $(1,\sigma_0)$. In fact, given $n\in\mathbb{N}$, there exists $\varepsilon$ sufficiently small such that $\operatorname{Arg}(\zeta_{\boldsymbol{\frac{ \pi}{ 2}}}(\sigma))=\theta$ is satisfied for at least $n+1$ distinct values in $(1+\varepsilon,\sigma_0)$, and the curve originated from the points
$\left(\operatorname{Re}(\zeta_{\boldsymbol{\frac{ \pi}{ 2}}}(\sigma)),\operatorname{Im}(\zeta_{\boldsymbol{\frac{ \pi}{ 2}}}(\sigma))\right)$, with $\sigma\in(1+\varepsilon,\sigma_0)$, makes at least $n+1$ turns around the origin.
In terms of the decreasing function $A_{\zeta_{\boldsymbol{\frac{ \pi}{ 2}}}}(\sigma_0)$ (see Lemma \ref{lemimco}, point iv)), this means that
\begin{equation}\label{pointy}
A_{\zeta_{\boldsymbol{\frac{ \pi}{ 2}}}}(1+\varepsilon)-A_{\zeta_{\boldsymbol{\frac{ \pi}{ 2}}}}(\sigma_0)> 2\pi n.
\end{equation}
Thus, by taking $\varepsilon$ sufficiently small and taking into account that $\zeta_{\mathbf{x}}(s)\neq 0$ for any $s\in U$, we deduce from \eqref{equi} and \eqref{pointy} the existence of a sequence $\{t_{n,m}\}_{m\geq 1}$ of real numbers satisfying the following property: For each $m=1,2,\ldots$, the equality $\operatorname{Arg}(\zeta_{\mathbf{x}}(\sigma+it_{n,m}))=\theta$ is satisfied for at least $n$ values $\sigma$ in $(1+\varepsilon,\sigma_0)$ (near the points $C\cap(1+\varepsilon,\sigma_0)$) and the curve traced by the points $\left(\operatorname{Re}(\zeta_{\mathbf{x}}(\sigma+it_{n,m})),\operatorname{Im}(\zeta_{\mathbf{x}}(\sigma+it_{n,m}))\right)$, with $\sigma\in (1+\varepsilon,\sigma_0)$, makes at least $n$ turns around the origin.
Indeed, if $\sigma_1\in C\cap(1+\varepsilon,\sigma_0)$ and $\theta\in(-\pi,\pi)$ then $\operatorname{Arg}(\zeta_{\boldsymbol{\frac{ \pi}{ 2}}}(\sigma_1))=\theta$, $\operatorname{Arg}(\zeta_{\boldsymbol{\frac{ \pi}{ 2}}}(\sigma_1-\rho))>\theta$ and $\operatorname{Arg}(\zeta_{\boldsymbol{\frac{ \pi}{ 2}}}(\sigma_1+\rho))<\theta$ for values of $\rho>0$ sufficiently small in an interval $(0,a_{\sigma_1})$. Now, fixed $\varepsilon>0$, thanks to \eqref{pointy} we assure the existence of $\rho_{\varepsilon}\in (0,a_{\sigma_1})$ satisfying $\operatorname{Arg}(\zeta_{\mathbf{x}}(\sigma-\rho_{\varepsilon}+it_{n,m}))>\theta$ and $\operatorname{Arg}(\zeta_{\mathbf{x}}(\sigma+\rho_{\varepsilon}+it_{n,m}))<\theta$, which yields by continuity that $\operatorname{Arg}(\zeta_{\mathbf{x}}(\sigma+it_{n,m}))=\theta$ for some $\sigma\in(\sigma_1-\rho_{\varepsilon},\sigma_1+\rho_{\varepsilon}).$
\end{proof}

In particular, the result above can be particularized for the significant case of the Riemann zeta function.

\begin{corollary}
Let $\sigma_0>1$ and $n\in\mathbb{N}$. Then there exists a relatively dense set of real numbers $\{t_{n,m}\}_{m\geq 1}$ such that, for each $m=1,2,\ldots$, the parametrized curve traced by the points $$\left(\operatorname{Re}(\zeta(\sigma+it_{n,m})),\operatorname{Im}(\zeta(\sigma+it_{n,m}))\right),\ \mbox{with }\sigma\in (1,\sigma_0),$$ makes at least $n$ turns around the origin.
\end{corollary}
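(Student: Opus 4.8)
The plan is immediate: the Riemann zeta function is the particular instance of the family $\zeta_{\mathbf{x}}(s)$ corresponding to the choice of coefficients $a_{p_j}=1$ for every prime $p_j$, that is, to the vector $\mathbf{x}=(0,0,\ldots,0,\ldots)\in\mathbb{R}^{\infty}$. Indeed, with this choice one has $e^{<\mathbf{r}_n,\mathbf{x}>i}=1$ for all $n$, so $\zeta_{\mathbf{0}}(s)=\sum_{n\geq1}n^{-s}=\zeta(s)$ on $\{s\in\mathbb{C}:\operatorname{Re}s>1\}$.

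Having recorded this identification, I would simply apply Theorem \ref{thm2} with $\mathbf{x}=\mathbf{0}$: given $\sigma_0>1$ and $n\in\mathbb{N}$, Theorem \ref{thm2} furnishes a relatively dense set of real numbers $\{t_{n,m}\}_{m\geq1}$ such that, for each $m$, the parametrized curve $\sigma\mapsto(\operatorname{Re}(\zeta_{\mathbf{0}}(\sigma+it_{n,m})),\operatorname{Im}(\zeta_{\mathbf{0}}(\sigma+it_{n,m})))$, with $\sigma\in(1,\sigma_0)$, makes at least $n$ turns around the origin. Substituting $\zeta_{\mathbf{0}}=\zeta$ yields exactly the assertion of the corollary.

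There is essentially no obstacle here; the only point worth a line of comment is the verification that the Riemann zeta function does belong to the class parametrized by \eqref{unoi}, which was already made explicit in Section \ref{sect2} (the vector $\mathbf{x}$ is recovered from the coefficients via $a_{p_j}=e^{ix_j}$, and $a_{p_j}=1$ forces $x_j=0$). Thus the corollary is a direct specialization of Theorem \ref{thm2}, and the proof consists of stating this identification and invoking the theorem.
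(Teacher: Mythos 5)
Your proof is correct and coincides with the paper's own treatment: the corollary is stated there as an immediate particularization of Theorem \ref{thm2}, obtained exactly as you do by noting that $\zeta=\zeta_{\mathbf{0}}$ (the choice $a_{p_j}=1$, i.e.\ $x_j=0$ for all $j$) belongs to the family \eqref{unoi}. No further argument is needed.
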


Moreover, Theorem \ref{thm2} (and other results as Lemma \ref{lemau} and Proposition \ref{proop8}) can also be immediately extended to its reciprocal sum (and all exponential sums included in its equivalence class) which is expressed as a Dirichlet series over the M\"{o}bius function $\mu(n)$ in the following terms (see \cite[p.3]{Titchmarsh}):
$$\frac{ 1}{ \zeta(s)}=\sum_{n\geq 1}\frac{\mu(n) }{ n^s}=\prod_{k=1}^{\infty}\left(1-p_k^{-s}\right),\ \operatorname{Re}s>1.$$

\bigskip

\noindent \textbf{Acknowledgements.} The first author was supported by PGC2018-097960-B-C22 (MCIU/AEI/ERDF, UE).

\bigskip

\bibliographystyle{amsplain}

\bigskip

%
%
%
%
%
%
%
%
%
%
%
%
%
%
%
%
%

\end{document}